\documentclass[11pt,reqno]{amsart}
\usepackage{amssymb, amsmath, amsthm, mathrsfs, setspace, enumerate}
\usepackage[bookmarksnumbered, colorlinks, plainpages]{hyperref}
\usepackage{array}
\usepackage{tabularx}
\usepackage{tikz}
\usepackage{mathrsfs}
\newtheorem*{theo3A}{Theorem 3.A}
\newtheorem*{theo3B}{Theorem 3.B}
\newtheorem*{theo3C}{Theorem 3.C}
\newtheorem*{theo3D}{Theorem 3.D}

\newtheorem*{lem2A}{Lemma 2.A}
\newtheorem*{lem2B}{Lemma 2.B}

\newtheorem*{propo1A}{Proposition 1.A}
\newtheorem*{propo1B}{Proposition 1.B}
\newtheorem*{propo1C}{Proposition 1.C}

\newtheorem*{defi1A}{Definition 1.A}
\newtheorem*{defi1B}{Definition 1.B}
\newtheorem*{defi1C}{Definition 1.C}

\newtheorem*{defi1D}{Definition 1.D}
\newtheorem*{defi1E}{Definition 1.E}
\newtheorem*{defi1F}{Definition 1.F}
\newtheorem*{defi1G}{Definition 1.G}
\newtheorem*{defi1H}{Definition 1.H}
\newtheorem*{defi1I}{Definition 1.I}

\newtheorem*{exm3A}{Example 3.A}

\newtheorem{theo}{Theorem}[section]
\newtheorem{lem}{Lemma}[section]

\newtheorem{exm}{Example}[section]
\newtheorem{defi}{Definition}[section]
\newtheorem{rem}{Remark}[section]
\newtheorem{propo}{Proposition}[section]

\newcommand{\ol}{\overline}

\newcommand{\be}{\begin{equation}}
\newcommand{\ee}{\end{equation}}
\newcommand{\beas}{\begin{eqnarray*}}
\newcommand{\eeas}{\end{eqnarray*}}
\newcommand{\bea}{\begin{eqnarray}}
\newcommand{\eea}{\end{eqnarray}}

\numberwithin{equation}{section}
\begin{document}

\title[B\MakeLowercase{orel Exceptional Values in Several Complex Variables}......]{\LARGE B\Large\MakeLowercase{orel Exceptional Values in Several Complex Variables\newline and Their Applications to Shared Values of Shifts\newline \vspace {.25cc} and Difference Operators}}

\date{}
\author[A. B\MakeLowercase{anerjee}, S. M\MakeLowercase{ajumder}, J. B\MakeLowercase{anerjee}]{A\MakeLowercase{bhijit} B\MakeLowercase{anerjee}$^1$, S\MakeLowercase{ujoy} M\MakeLowercase{ajumder}$^2$ $^{*}$, J\MakeLowercase{hilik} B\MakeLowercase{anerjee}$^3$}
\address{$^{1}$ Department of Mathematics, University of Kalyani, West Bengal 741235, India.}
\email{abanerjee\_kal@yahoo.co.in}
\address{$^{2}$Department of Mathematics, Raiganj University, Raiganj, West Bengal-733134, India.}
\email{sm05math@gmail.com, sjm@raiganjuniversity.ac.in}
	\address{$^{3}$ Department of Mathematics, University of Kalyani, West Bengal 741235, India.}
\email{jhilikbanerjee38@gmail.com}

\renewcommand{\thefootnote}{}
\footnote{2020 \emph{Mathematics Subject Classification}: 32A20, 32A22, 32H30.}
\footnote{\emph{Key words and phrases}: Meromorphic functions in $\mathbb{C}^n$, Exponent of convergence, Borel exceptional value, Order, Deficiency of values, Uniqueness, Difference Operator.}
\footnote{*\emph{Corresponding Author}: Sujoy Majumder.}

\renewcommand{\thefootnote}{\arabic{footnote}}
\setcounter{footnote}{0}

\begin{abstract} 
In this paper, we investigate shared value problems for shifts and higher-order difference operators of meromorphic and entire functions in several complex variables. Using Nevanlinna theory in $\mathbb{C}^n$, we obtain new uniqueness theorems when functions share values counting or ignoring multiplicities, extending several classical one-variable results to higher dimensions. A key contribution of this work appears in Section 2, where we establish fundamental results on Borel exceptional values in several complex variables. These propositions provide the main tools for proving our principal theorems. As applications, we derive conditions ensuring that a transcendental entire function satisfies $\Delta_c^{k} \equiv d\hspace{.05cc} f$ and we study meromorphic solutions of certain partial differential-difference equations, obtaining growth estimates and structural descriptions of entire solutions. To the best of our knowledge, this is the first systematic study of such shared value problems for higher-order difference operators in several complex variables.
\end{abstract}

\thanks{Typeset by \AmS -\LaTeX}
\maketitle

\section{{\bf Preliminaries on Nevanlinna Theory in Several Complex Variables}}

This section collects the fundamental concepts and notation from Nevanlinna theory in several complex variables that will be used throughout the paper. Our presentation follows the framework developed by Stoll \cite{Stoll-1974}, Nishino \cite{Nishino-2001}, and Hu-Li-Yang \cite{Hu-Li-Yang-2003}.

\subsection*{Basic Notation}

For any set $S$, we denote by $S^n$ its $n$-fold Cartesian product.  
If $S$ is partially ordered and $a \le b$, we use
\[
S[a,b]=\{x\in S:a\le x\le b\}, \quad
S(a,b]=\{x\in S:a<x\le b\},
\]
\[
S[a,b)=\{x\in S:a\le x<b\}, \quad
S(a,b)=\{x\in S:a<x<b\}.
\]
In particular,
\[
\mathbb{Z}_+ = \mathbb{Z}[0,\infty), \quad
\mathbb{R}_+ = \mathbb{R}[0,\infty), \quad
\mathbb{Z}^+ = \mathbb{Z}(0,\infty), \quad
\mathbb{R}^+ = \mathbb{R}(0,\infty).
\]

\subsection*{Hermitian Structure and Differential Forms}

Let $W$ be an $n$-dimensional Hermitian vector space equipped with a positive definite Hermitian form $(\cdot \mid \cdot)$. The induced norm is
\[
\|z\| = \sqrt{(z \mid z)}, \quad z \in W.
\]
When $W=\mathbb{C}^n$, the Hermitian product is taken to be
\[
(a \mid b)=\sum_{j=1}^n a_j \overline{b_j},
\quad a,b\in\mathbb{C}^n.
\]

For any subset $A\subset W$ and $r\ge0$, define (see \cite[pp. 6]{Stoll-1974})
\[
A[r]=\{z\in A:\|z\|\le r\},\quad
A(r)=\{z\in A:\|z\|<r\},\quad
A\langle r\rangle=\{z\in A:\|z\|=r\}.
\]

We introduce the functions
\[
\tau_0(z)=\|z\|,\qquad \tau_W(z)=\|z\|^2.
\]

On $W$, the exterior derivative decomposes as $d=\partial+\bar{\partial}$, and we set
\[
d^c=\frac{\iota}{4\pi}(\bar{\partial}-\partial), \qquad
dd^c=\frac{\iota}{2\pi}\partial\bar{\partial}.
\]
The standard \text{K\"ahler} form on $W$ is
\[
\upsilon_W = dd^c \tau_W >0.
\]
On $W\setminus\{0\}$, we further define
\[
\omega_W = dd^c \log \tau_W \ge 0, \qquad
\sigma_W = d^c\log\tau_W \wedge \omega_W^{n-1},
\]
where $n=\dim(W)$ (see \cite[pp. 6]{Stoll-1974}).

\subsection*{Projective Space}

\begin{defi1A}\cite[pp. 7]{Nishino-2001}
	Let $z',z''\in\mathbb{C}^{n+1}\setminus\{0\}$. We say that $z'$ and $z''$ are equivalent if there exists $c\in\mathbb{C}\setminus\{0\}$ such that
	\[
	z_j''=c z_j', \quad j=0,1,\ldots,n.
	\]
	The equivalence classes in $\mathbb{C}^{n+1} \backslash  \{0\}$ form an 
$n$-dimensional space called the complex projective space, which we denote by
	\[
	\mathbb{P}^n=\mathbb{P}(\mathbb{C}^{n+1}).
	\]
	Identify the compactified plane $\mathbb{C}\cup\{\infty\}$ with $\mathbb{P}^1=\mathbb{P}(\mathbb{C}^2)$ by setting (see \cite[pp. 634-635]{Stoll-1983})
	\[
	\mathbb{P}(z,w)=\frac{z}{w}, \quad
	\mathbb{P}(z,1)=z, \quad
	\mathbb{P}(1,0)=\infty
	\]
	if $z\in\mathbb{C}$ and $w\in\mathbb{C}\backslash \{0\}$.
\end{defi1A}

\subsection*{Divisors and Multiplicities}

\begin{defi1B}\cite[pp. 12]{Stoll-1974}
	Let $G\neq \varnothing$ be an open subset of $W$. Let $f$ be a holomorphic function in $W$.
Take $a\in G$. Let $G_a$ be the connectivity component of $G$ containing $a$. Assume $f\mid_{G_a}\not\equiv 0$. Then $f$ admits a local expansion
	\[
	f(z)=\sum_{\lambda=p}^{\infty}P_\lambda(z-a),
	\]
	where $P_\lambda$ is homogeneous of degree $\lambda$ and $P_p\not\equiv0$. The polynomials $P_{\lambda}$ depend on $f$ and $a$ only.
	The integer $\mu_f^0(a)=p$ is called the zero multiplicity of $f$ at $a$.
\end{defi1B}

\begin{defi1C}\cite[pp. 12]{Stoll-1974}
Let $f$ be a meromorphic function on $G$, where $G\neq \varnothing$ is an open subset of $W$.
Take $a\in G$ and $c\in\mathbb{P}^1$. Let $G_a$ be the component of $G$ containing $a$. If $0\equiv f\mid_{G_a}\not\equiv c$, define $\mu^c_f(a)=0$. Assume $0\not\equiv f\mid_{G_a}\not\equiv c$. Then an open connected neighborhood $U$ of $a$ in $G$ and holomorphic functions $g\not\equiv 0$ and $h\not\equiv 0$ exist on $U$ such that $h. f\mid_U=g$ and $\dim g^{-1}(0)\cap h^{-1}(0)\leq n-2$, where $n=\dim(W)$. Therefore the $c$-multiplicity of $f$ is just $\mu^c_f=\mu^0_{g-ch}$ if $c\in\mathbb{C}$ and $\mu^c_f=\mu^0_h$ if $c=\infty$. The function $\mu^c_f:G\to \mathbb{Z}$ is nonnegative and is called the $c$-divisor of $f$. 
\end{defi1C}

If $f\not\equiv0$ on each component of $G$, the divisor of $f$ is
\[
\mu_f=\mu_f^0-\mu_f^\infty.
\]
The function $f$ is holomorphic if and only if $\mu_f\ge0$.

\begin{defi1D}\cite[pp. 13]{Stoll-1974}
A function $\nu:G\to \mathbb{Z}$ is said to be a divisor if and only if for each $a\in G$ an open, connected neighborhood $U$ of $a$ in $G$ and a meromorphic function $f\not\equiv 0$ exist such that $\nu\mid_{U}=\mu_f$. A divisor $\nu:G\to \mathbb{Z}$ is non-negative if and only if for each $a\in G$ an open, connected neighborhood $U$ of $a$ in $G$ and a holomorphic function $f\not\equiv 0$ exist such that $\nu\mid_{U}=\mu_f$.
\end{defi1D}

We denote
\[
\operatorname{supp}\nu=\overline{\{z\in G:\nu(z)\ne0\}}.
\]

\begin{defi1E}\cite[pp. 26]{Stoll-1974} Let $\nu\geq 0$ be a non-negative divisor on the hermitian vector space $W$. Define $A=\displaystyle \operatorname{supp}\nu$. Assume $0\not\in A$. A non-negative, increasing, integral valued function $q:\mathbb{R}^+\to \mathbb{Z}^+$ is said to be a weight for $\nu$ if and only if the integral
	\begin{align*}
		\int\limits_{A} \nu\;\left(r/\tau_0\right)^{q\circ\tau_0+1}\omega_W^{n-1}<\infty
	\end{align*}
	converges for all $r>0$, where $n=\dim(W)$. For every given non-negative divisor $\nu$ there exists a weight function for $\nu$ (see \cite{Stoll-1953}).
\end{defi1E}
\subsection*{Counting and Characteristic Functions}

\begin{defi1F} Take $0<R\leq +\infty$. Let $\nu$ be a divisor on $W(R)$ with $A=\displaystyle \operatorname{supp}\nu$. For $t>0$, the counting function $n_{\nu}$ is defined by
\begin{align*}
 \displaystyle n_{\nu}(t)=t^{-2(m-1)}\int_{A[t]}\nu\; \upsilon_W^{n-1},
 \end{align*}
where $n=\dim(W)$.
Obviously 
\begin{align*}
n_{\nu}(0)=\lim\limits_{t\to 0}n_{\nu}(t)\in\mathbb{Z}
\end{align*}
exists, where the Lelong number $n_{\nu}(0)$ is an integer (see \cite[Theorem 3.1]{Stoll-1974}). We know that
\begin{align*}
 \displaystyle n_{\nu}(t)=\int_{A(t)}\nu\;\omega_W^{n-1}+n_{\nu}(0).
 \end{align*}
If $\nu$ is non-negative, then $n_{\nu}$ increases. For $0<s<r<R$, we define the valence function of $\nu$ by
\begin{align*}
N_{\nu}(r)=N_{\nu}(r,s)=\int_{s}^r n_{\nu}(t)\frac{dt}{t}.
\end{align*}
\end{defi1F}

For $\nu=\mu_f^a$, we write $n(t,a;f)$ and $N(r,a;f)$, with the usual conventions for $a=\infty$.  
Truncated multiplicities are defined by $\mu_{f,k}^a=\min\{\mu_f^a,k\}$.
\vspace{1.2mm}

For $k\in\mathbb{N}$, define the truncated multiplicity functions on $W$ by $\mu_{f,k}^a(z)=\min\{\mu_f^a(z),k\}$.
We write the truncated counting functions 
\beas n_{\nu}(t)=\begin{cases}
n_k(t,a;f), &\text{if $\nu=\mu_{f,k}^a$}\\
\ol n(t,a;f), &\text{if $\nu=\mu_{f,1}^a$}
\end{cases}\eeas
and the truncated valence functions
\beas N_{\nu}(t)=\begin{cases}
N_k(t,a;f), &\text{if $\nu=\mu_{f,k}^a$}\\
\ol N(t,a;f), &\text{if $\nu=\mu_{f,1}^a$}.
\end{cases}\eeas

If $a=\infty$, we write $n_k(t,a;f)=n_k(t,f)$ and $N_k(t,a;f)=N_k(t,f)$
and so on.\par
A non-negative divisor $\nu:W\to \mathbb{Z}_+$ is said to be algebraic if and only if $\nu$ is  the zero divisor of a polynomial. Thus a divisor $\nu:W\to \mathbb{Z}_+$ is algebraic if and only if $n_{\nu}$ is bounded, which implies that $N_{\nu}=O(\log r)$ (see \cite[pp. 19]{Stoll-1974}).

\subsection*{Nevanlinna Characteristic}

\begin{defi1G}\cite[pp. 16-17]{Stoll-1974}
Let $G\neq \varnothing$ be an open subset in $W$. Let $f$ be a meromorphic function in $G$ in the sense that $f$ can be written as a quotient of two relatively prime holomorphic functions. We will write $f = (f_0, f_1)$ where $f_0 \not\equiv 0$. The standard definition of the Nevanlinna characteristic function of $f$ is given by
\begin{align*}
T_f(r,s) := \int_s^r \frac{A_f(t)}{t}\, dt,
\end{align*}
where $0 < s < r$ and
\begin{align*}
A_f(t)
= \frac{1}{t^{2n-2}} \int_{W(t)} f^*(\ddot{\omega}) \wedge \upsilon_W^{n-1}
= \int_{W(t)} f^*(\ddot{\omega}) \wedge \omega_W^{\,n-1} + A_f(0),
\end{align*}
where $n=\dim(W)$.
Here the pullback $f^*(\ddot{\omega})$ satisfies
\begin{align*}
f^*(\ddot{\omega}) = dd^c \log\left( |f_0|^2 + |f_1|^2 \right)
\end{align*}
for all $z$ outside of the set of indeterminacy $I_f:= \{ z \in W : f_0(z) = f_1(z) = 0 \}$ of $f$.

Take $a\in \mathbb{P}^1$ and $0<R\leq +\infty$. Let $f\not\equiv 0$ be a meromorphic function on $W(R)$.
For $0<r<R$, define the compensation of $f$ for $a$ by
\begin{align*}
m^a_f(r)=\int\limits_{W\langle r\rangle}\log \frac{1}{||f,a||} \;\sigma_W,
\end{align*}
where $||f,a||$ denotes the chordal distance from $f$ to $a\in\mathbb{P}^1$.
Then the First Main Theorem of Nevanlinna theory becomes
\begin{align*}
T_f(r)=T_f(r,s)=N_{\mu^a_f}(r,s)+m^a_f(r)-m^a_f(s),
\end{align*}
where $0<s<r$.	
\end{defi1G}

There is slightly different way to continue the formulation of Nevanlinna theory from here (see \cite[pp.15]{Hu-Li-Yang-2003}).
Take $0<R\leq +\infty$. Let $f\not\equiv 0$ be a meromorphic function on $W(R)$. Let $0<s<r<R$. 
Now with the help of the positive logarithm function, we define the proximity function of $f$ by
\begin{align*}
\displaystyle m(r, f)=\int_{W\langle r\rangle} \log^+ |f|\;\sigma_W \geq  0.
\end{align*}

The characteristic function of $f$ is defined by $T(r,f)=m(r,f)+N(r,f)$. We know that (see \cite[pp.15]{Hu-Li-Yang-2003})
\begin{align*}
\displaystyle T\left(r,\frac{1}{f}\right)=T(r,f)-\int_{W\langle s\rangle} \log |f|\;\sigma_W.
\end{align*}

We define $m(r,a;f)=m(r,f)$ if $a=\infty$ and $m(r,a;f)=m\big(r,\frac{1}{f-a}\big)$ if $a$ is finite complex number. Now if $a\in\mathbb{C}$, then the first main theorem of Nevanlinna theory becomes $m(r,a;f)+N(r,a;f)=T(r,f) + O(1)$, where $O(1)$ denotes a bounded function when $r$ is sufficiently large.

Finally, if we compare the functions $T_f(r)$ and $T(r,f)$, then we have (see \cite[pp.19]{Hu-Li-Yang-2003})
\begin{align*}
T_f(r)=T(r,f)+O(1).
\end{align*}

\subsection*{Growth, Exponent of convergence and Deficiency}
Consider an increasing, non-negative function $S:\mathbb{R}_{+}\to\mathbb{R}_{+}.$
The order of $S$ is defined by (see \cite[pp. 28]{Stoll-1974})
\begin{align*}
\operatorname{Ord}\, S= \limsup_{r\to\infty}\frac{\log^{+} S(r)}{\log r}.
\end{align*}

For $0<\mu\in\mathbb{R}$, we define
\begin{align*}
t_{\mu}(S)=\limsup_{r\to\infty}\frac{S(r)}{r^{\mu}}\quad \text{and}\quad
J_{\mu}(S)=\int_{1}^{\infty}\frac{S(t)}{t^{\mu+1}}\,dt.
\end{align*}

Clearly if $0<\mu<\operatorname{Ord}S$, then $t_{\mu}(S)=J_{\mu}(S)=\infty$ and if $\mu>\operatorname{Ord}S$, then $t_{\mu}(S)<\infty$ and $\ J_{\mu}(S)<\infty$.
If $0<\operatorname{Ord}S=\lambda<\infty$, the function $S$ is said to be of
\begin{itemize}
	\item maximal type \quad iff \quad \(t_{\lambda}(S)=+\infty\),
	\item middle type \quad iff \quad \(0<t_{\lambda}(S)<+\infty\),
	\item minimal type \quad iff \quad \(t_{\lambda}(S)=0\).
\end{itemize}

The function $S$ is said to be in the
\begin{itemize}
	\item convergence class \quad iff \quad \(J_{\lambda}(S)<+\infty\),
	\item divergence class \quad iff \quad \(J_{\lambda}(S)=+\infty\).
\end{itemize}

\begin{defi1H}\cite[pp. 28]{Stoll-1974}Let $\nu\geq 0$ be a non-negative divisor on $W$. Then the order, class, and type
of $\nu$ are defined as the order, class, and type of $n_{\nu}$. Observe that for each $s>0$, the order, class and type of $n_{\nu}$
coincide with the order, class and type of $N_{\nu}(\square,s)$. If $0\notin \operatorname{supp}\nu$, this is true for $s=0$ also.
\end{defi1H}

Let $f$ be a non-constant meromorphic function in $W$. The function $f$ is said to have order $\rho(f)$, maximal, mean, minimal type or convergence class if the characteristic function $T(r,f)$ has those properties. Consequently the order and the lower order of $f$ are given respectively by
\[\operatorname{Odr} f= \rho(f):=\varlimsup\limits_{r\to \infty}\frac{\log^+ T(r,f)}{\log r}\;\;\text{and}\;\;\mu(f):=\varliminf\limits_{r\to \infty}\frac{\log^+ T(r,f)}{\log r}.\]

\begin{propo1A}\cite[Proposition 1.7]{Hu-Li-Yang-2003}
Suppose that $f$ and $g$ are non-constant meromorphic functions in $\mathbb{C}^n$. We have
\begin{enumerate}
    \item[(a)] $\rho(\mathcal{L}(f)) = \rho(f)$, where $\mathcal{L}(f)$ is any M\"{o}bius transformation of $f$.
    \item[(b)] $\rho (f \cdot g) \leq \max\{\rho(f), \rho(g)\}$.
    \item[(c)] $\rho(f + g) \leq \max\{\rho(f), \rho(g)\}$.
    \item[(d)] if $\rho(f) < \lambda(g)$, then $\rho(fg) = \rho(f + g) = \rho(g)$.
    \item[(e)] $\max\{\mu(fg), \mu(f + g)\} \leq \max\{\rho(f), \mu(g)\}$.
    \item[(f)] if $\rho(f) < \mu(g)$, then $\max\{\mu(fg), \mu(f + g)\} \leq \mu(g)$.
    \item[(g)] if $\rho(f) < \mu(g)$, then $T(r, f) = o(T(r, g))$ as $r \to \infty$.
\end{enumerate}
\end{propo1A}

\begin{defi1I}\cite[pp. 28-29]{Stoll-1974}
Define $A=\operatorname{supp}\nu$ and $B=A-A(1).$ A number $\mu>0$ is an exponent of convergence of $\nu$ if one-hence all-of
the following integrals exist:
\begin{align*}
&\int_{B} \nu\,\tau_{0}^{-\mu}\;\omega_W^{n-1} < \infty\\
&\int_{B} \nu\;\tau_{0}^{\,1-m-\mu}\,v_W^{n-1} < \infty,\\
&\int_{1}^{\infty} n_{\nu}(t)\,t^{-\mu-1}\,dt < \infty,\\
&\int_{1}^{\infty} N_{\nu}(t,s)\,t^{-\mu-1}\,dt < \infty,
\quad \text{if } s>0.
\end{align*}

Otherwise, $\mu$ is said to be an exponent of divergence of $\nu$. If $\mu>\operatorname{Ord}\nu$, then $\mu$ is an exponent of convergence of $\nu$; if $0<\mu<\operatorname{Ord}\nu$, then $\mu$ is an exponent of divergence of $\nu$.
If $0<\operatorname{Ord}\nu=\mu<\infty$, then $\mu$ is an exponent of convergence of $\nu$ if and only if $\nu$ is in the convergence class.
\end{defi1I}

Now we state the following result related to canonical function of finite order.
\begin{propo1B} \cite[Theorem 6.3]{Stoll-1974} Let $\nu$ be a non-negative divisor on $W$ with $0\notin \operatorname{supp}\nu$.
Assume that $q+1\in\mathbb{N}$ is an exponent of convergence of $\nu$. Let $h$ be the canonical function of $\nu$ for the constant weight $q$ of $\nu$. Take $\theta\in\mathbb{R}$ and $r\in\mathbb{R}$ with $0<\theta<1$ and $r>0$. Then
\begin{align*}
&\log M_h(\theta r)\le 8m\,c(q)\,(1-\theta)^{-3m}\,K_q(r;n_\nu),\\
&\operatorname{Ord}\nu\le \operatorname{Ord} h\le \max\{q,\operatorname{Ord}\nu\}\le q+1.
\end{align*}
	
If $q\in\mathbb{Z}_{+}$ is the smallest non-negative integer such that $q+1$ is an exponent of convergence of $\nu$, then
\begin{align*}
q \le \operatorname{Ord}\nu = \operatorname{Ord} h \le q+1,
\end{align*}
\begin{align*}
&\operatorname{type} h = \operatorname{type}\nu\quad \text{provided } q < \operatorname{Ord} h \le q+1,\\
&\operatorname{class} h = \operatorname{class}\nu\quad \text{provided } q <\operatorname{Ord} h < q+1.
\end{align*}
\end{propo1B}

Proposition 1.B was proved in \cite{Stoll-1953}. Let $\nu\geq 0$ be a non-negative divisor of finite order on $W$. Then the smallest non-negative integer $q$ such that $q+1$ is an exponent of convergence of $\nu$ is called the \emph{genus} of $\nu$.

The canonical function of a non-negative divisor $\nu$ on $W$ for a constant weight $q$ can be intrinsically characterized (see \cite[Proposition 6.4]{Stoll-1974}).
\begin{propo1C}\cite[Theorem 3]{Ronkin-1968} Let $\nu\geq 0$ be a non-negative divisor on $W$ with $0\notin \operatorname{supp}\nu$.
Let $q+1\in\mathbb{N}$ be an exponent of convergence of $\nu$. Then there exists one and only one holomorphic function
$h:W\to\mathbb{C}$ such that
\begin{align*}
h(0)=1,\quad\mu_{\log h}(0)\ge q+1,\quad \mu_h=\nu,\quad\operatorname{Ord} h \le q+1
\end{align*}
and such that if $\operatorname{Ord} h=q+1$, then $h$ is of minimal type. The unique function is the canonical function of $\nu$ for the constant weight $q$.
\end{propo1C}

Clearly $f$ is non-constant, then $T(r, f) \rightarrow \infty$ as $r \rightarrow$ $\infty$. Further $f$ is rational if and only if $T(r,f)=O(\log r)$ (see \cite[pp. 19]{Stoll-1974}).
On the other hand, if $f$ is transcendental, then 
\[\lim\limits_{r \rightarrow \infty} \frac{T(r, f)}{\log r}=+\infty.\] 
The deficiency of $f$ for a value $a$ is defined us $\delta_f(a)$:
\begin{align*}
0\leq \delta_f(a)=1-\limsup\limits_{r\to \infty}\frac{N(r,a;f)}{T(r,f)}\leq 1.
\end{align*}
Clearly $\delta_f(a)=1$ if $\mu^a_f$ is algebraic and if $f$ is transcendental. The set $\{a\in\mathbb{P}^1:\delta_f(a)>0\}$ is at most countable and (see \cite[pp. 20]{Stoll-1974})
\begin{align*}
\sum\limits_{a\in\mathbb{P}^1} \delta_f(a)\leq 2.
\end{align*}
An element $a\in \mathbb{P}^1$ is said to be a Picard (exceptional) value of $f$ if $a\not\in f(W)$. Obviously, $\delta_f(a)=1$ if $a$ is a Picard value of $f$.

\section{{\bf Borel exceptional value and related results in several complex variables}}

This section constitutes the mathematical core of the paper. It establishes the main structural results on Borel exceptional values in several complex variables, presenting them in the form of clear and precise propositions. These propositions provide the essential groundwork and form the basis for the principal theorems proved in the subsequent sections. 
 
\begin{defi} Let $f$ be a transcendental meromorphic function in $\mathbb{C}^n$ with order $\rho(f)>0$. Let $a\in\mathbb{P}^1$. Then $a$ is said to be a Borel exceptional value of $f$ if $\operatorname{Ord}\mu^a_f<\rho(f)$. According to Definition 1.H, we have
\begin{align*}
\limsup\limits_{r\to \infty} \frac{\log ^+ N(r,a;f)}{\log r}<\rho(f).
\end{align*}
\end{defi}

The following result is known as second main theorem:
\begin{lem2A}\label{L.2}\cite[Lemma 1.2]{Hu-Yang-1996} Let $f$ be a non-constant meromorphic function in $\mathbb{C}^n$ and let $a_1,a_2,\ldots,a_q$ be different points in $\mathbb{P}^1$. Then
\begin{align*}
(q-2)T(r,f)\leq \sideset{}{_{j=1}^{q}}{\sum} \ol N(r,a_j;f)+O(\log (rT(r,f)))
\end{align*}
holds only outside a set of finite measure on $\mathbb{R}^+$.
\end{lem2A}

\begin{propo}\label{pro0} Let $f$ be a transcendental meromorphic function in $\mathbb{C}^n$ with finite positive order $\rho(f)$. Then $f$ has at most two Borel exceptional values.
\end{propo}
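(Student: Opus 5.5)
We argue by contradiction, using the second main theorem (Lemma 2.A) with three values. Suppose $f$ has three distinct Borel exceptional values $a_1,a_2,a_3\in\mathbb{P}^1$. By the definition of Borel exceptional value and Definition 1.H, each $\operatorname{Ord}\mu^{a_j}_f<\rho(f)$. Since $N(r,a_j;f)$ has the same order as $n_{\mu^{a_j}_f}$, we may fix
\[
\lambda:=\max_{1\le j\le 3}\operatorname{Ord}\mu^{a_j}_f<\rho(f),
\]
and choose $\varepsilon>0$ with $\lambda+2\varepsilon<\rho(f)$. Then for all sufficiently large $r$,
\[
\ol N(r,a_j;f)\le N(r,a_j;f)\le r^{\lambda+\varepsilon}, \qquad j=1,2,3.
\]

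Applying Lemma 2.A with $q=3$ gives
\[
T(r,f)\le \sum_{j=1}^{3}\ol N(r,a_j;f)+O(\log (rT(r,f)))
\]
for all $r$ outside a set $E\subset\mathbb{R}^+$ of finite linear measure. Since $\rho(f)<\infty$, we have $T(r,f)\le r^{\rho(f)+1}$ for large $r$, so the error term is $O(\log r)$. Hence
\[
T(r,f)\le 3r^{\lambda+\varepsilon}+O(\log r)\le r^{\lambda+2\varepsilon}, \qquad r\notin E,\ r \text{ large}.
\]

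The main obstacle is removing the exceptional set $E$ in order to bound the order, because $\rho(f)$ is a $\limsup$ taken over all $r$. We use the standard monotonicity argument. Since $E$ has finite measure, for every large $r$ the interval $[r,2r]$ contains a point $r'\notin E$. Because $T(r,f)$ is increasing,
\[
T(r,f)\le T(r',f)\le (r')^{\lambda+2\varepsilon}\le 2^{\lambda+2\varepsilon}r^{\lambda+2\varepsilon}
\]
for all large $r$. It follows that $\rho(f)\le\lambda+2\varepsilon<\rho(f)$, which is a contradiction. Therefore $f$ has at most two Borel exceptional values.

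Transcendence enters only to ensure that $f$ is non-constant, so that Lemma 2.A applies. Positivity of $\rho(f)$ is needed only so that the definition of Borel exceptional value makes sense.
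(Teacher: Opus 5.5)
Your proof is correct and follows the same route as the paper: you apply Lemma 2.A to three putative Borel exceptional values and compare orders to reach $\rho(f)\le\max_j\operatorname{Ord}\mu^{a_j}_f<\rho(f)$. It is in fact more careful than the paper's proof, which takes $\log^+$ of the second main theorem inequality without addressing the finite-measure exceptional set or the $O(\log(rT(r,f)))$ term; your doubling-interval argument supplies that step, and if $T(r,f)$ is only known to be increasing up to a bounded error, it still goes through because $T(r,f)=T_f(r)+O(1)$ with $T_f$ increasing.
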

\begin{proof} If possible suppose there are at least three distinct Borel exceptional values $a_1$, $a_2$ and $a_3$, say. Let 
\begin{align}\label{pro1:1.1}
 \limsup\limits_{r\to \infty} \frac{\log ^+ N(r,a_j;f)}{\log r}=s_j
 \end{align}
and $s=\max\{s_1,s_2,s_3\}$. Clearly $s<\rho(f)$. Now using Lemma 2.A, we get
\begin{align}\label{pro1:1.2}
T(r,f)\leq \sideset{}{_{j=1}^{3}}{\sum} \ol N(r,a_j;f)+O(\log (rT(r,f)))
\end{align}
holds only outside a set of finite measure on $\mathbb{R}^+$. Taking $\log^+$ on the both sides of (\ref{pro1:1.2}) and then using (\ref{pro1:1.1}), we get
\begin{align*}
\rho(f)=\limsup\limits_{r\to \infty} \frac{\log ^+ T(r,f)}{\log r}\leq s<\rho(f),
\end{align*}
which is impossible. Hence $f$ has at most two Borel exceptional values.
\end{proof}

\begin{rem} Obviously, when $\rho(f)>0$, Picard exceptional value is the Borel exceptional value. Then $1$ is the Borel exceptional value of $f(z)=e^{z_1+z_2+\ldots+z_n}+1$.
\end{rem}

\begin{lem2B}\label{2A}\cite[Lemma 3.59]{Hu-Li-Yang-2003} Let $P$ be a non-constant entire function in $\mathbb{C}^n$. Then 
\begin{align*}
\rho(e^P)=
\begin{cases}
\deg(P), & \text{if $P$ is a polynomial,}\\
+\infty, & \text{otherwise.}
\end{cases}
\end{align*}
\end{lem2B}

\begin{propo}\label{Pro1} Let $f$ be a non-constant meromorphic function in $\mathbb{C}^n$ such that $f(0)\neq 0,\infty$ and $\rho(f)$ be finite. Then $$
f=\frac{H_1}{H_2}e^{\alpha},$$
where $H_1$ and $H_2$ are canonical functions such that $\mu^0_f=\mu^0_{H_1}$, $\mu^{\infty}_f=\mu^0_{H_2}$,
\begin{align*}
\rho(H_1)=\limsup\limits_{r\to\infty} \frac{\log^+ N_{\mu^0_{f}}(r,0)}{\log r}\leq \rho(f)
\end{align*}
and 
\begin{align*}
\rho(H_2)=\limsup\limits_{r\to\infty} \frac{\log^+ N_{\mu^{\infty}_{f}}(r,0)}{\log r}\leq \rho(f)
\end{align*}
and $\alpha$ is a polynomial in $\mathbb{C}^n$ such that $\deg(\alpha)=\rho(e^{\alpha})\leq \rho(f)$.
\end{propo}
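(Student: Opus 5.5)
We follow the classical Hadamard factorization route, with Stoll's canonical functions replacing Weierstrass products. Since $f(0)\neq 0,\infty$, the divisors $\nu_1=\mu^0_f$ and $\nu_2=\mu^\infty_f$ are non-negative with $0\notin\operatorname{supp}\nu_j$. The proof has three steps: control the orders of the divisors, build the canonical functions, and identify the remaining factor.

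\emph{Step 1: orders of the divisors.} By the first main theorem, $N_{\nu_1}(r,0)\le T(r,1/f)+O(1)=T(r,f)+O(1)$ and $N_{\nu_2}(r,0)\le T(r,f)$. Hence
\[
\operatorname{Ord}\nu_j=\limsup_{r\to\infty}\frac{\log^+N_{\nu_j}(r,0)}{\log r}\le\rho(f)<\infty ,
\]
using Definition 1.H with $s=0$, which is allowed because $0\notin\operatorname{supp}\nu_j$. Since each $\nu_j$ has finite order, it has a genus $q_j$, the least non-negative integer with $q_j+1$ an exponent of convergence. If $\nu_j\equiv 0$, we simply take $H_j\equiv1$.

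\emph{Step 2: the canonical functions.} Let $H_j$ be the canonical function of $\nu_j$ for the constant weight $q_j$, given by Proposition 1.C. Then $H_j(0)=1$ and $\mu^0_{H_j}=\nu_j$. By the second part of Proposition 1.B,
\[
\rho(H_j)=\operatorname{Ord}H_j=\operatorname{Ord}\nu_j,
\]
which is the displayed equality, and Step 1 gives $\rho(H_j)\le\rho(f)$. One small point: Proposition 1.B is phrased via $\operatorname{Ord}h$, the order of $\log M_h$, while $\rho$ is defined via $T(r,h)$. For entire $h$ these orders coincide, by the standard inequality $T(r,h)\le\log^+M_h(r)\le C_\theta T(r/\theta,h)$ in $\mathbb{C}^n$, which we take as known.

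\emph{Step 3: the exponential factor.} Set $F=fH_2/H_1$. Its zero and pole divisors cancel exactly, since $\mu_F=\mu_f+\nu_2-\nu_1=0$. So $F$ is entire and zero-free on $\mathbb{C}^n$, which is simply connected, and therefore $F=e^{\alpha}$ for some entire $\alpha$. Applying Proposition 1.A(b) and (a) (note $1/H_1$ is a Möbius transform of $H_1$):
\[
\rho(e^\alpha)\le\max\{\rho(f),\rho(H_1),\rho(H_2)\}=\rho(f)<\infty .
\]
If $\alpha$ is constant, it is a polynomial of degree $0$. Otherwise Lemma 2.B forces $\alpha$ to be a polynomial with $\deg\alpha=\rho(e^\alpha)\le\rho(f)$. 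This yields $f=\frac{H_1}{H_2}e^\alpha$.

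The main obstacle is Step 2. It requires checking that the hypotheses of Propositions 1.B and 1.C hold with the genus as weight, and carefully reconciling the order of the divisor $\nu_j$ (defined through $n_{\nu_j}$ and $N_{\nu_j}$) and the order $\operatorname{Ord}$ of the holomorphic function $H_j$ with the Nevanlinna order $\rho$. The remaining steps are formal.
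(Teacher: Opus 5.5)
Your proof is correct and follows essentially the paper's route: canonical functions from Propositions 1.B and 1.C for the zero and pole divisors, the first main theorem for the bound $\operatorname{Ord}\nu_j\le\rho(f)$, and Lemma 2.B to force $\alpha$ to be a polynomial. The only differences are cosmetic. You divide $f$ by $H_1/H_2$ directly, whereas the paper first writes $f=g/h$ with $g,h$ entire and coprime and then factors $g=H_1e^{\alpha_1}$, $h=H_2e^{\alpha_2}$ separately. You also make explicit two points the paper leaves implicit: the finiteness of $\operatorname{Ord}\nu_j$, which is needed before a genus can be chosen, and the estimate $\rho(e^{\alpha})\le\rho(f)$ via Proposition 1.A.
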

\begin{proof} We note that for every meromorphic function $f$ in $\mathbb{C}^n$ there are entire functions $g$ and 
$h$ such that 
\begin{align}\label{p1.1}
f=\frac{g}{h}
\end{align}
in $\mathbb{C}^n$ and $g$ and $h$ are coprime at every point of $\mathbb{C}^n$ \cite[pp. 186]{LH1} which implies 
that $F=(h,g):\mathbb{C}^n\to \mathbb{C}^2$ is a reduced representative of $f$. Since $f(0)\neq 0, \infty$, we have $g(0)\neq 0$ and $h(0)\neq 0$. Let $\nu=\mu^0_g$. Clearly $\nu$ is a non-negative divisor on $\mathbb{C}^n$ such that $0\not\in \text{supp}\;\nu$. Suppose $q\in\mathbb{Z}_+$ is the smallest non-negative integer such that $q+1$ is an exponent of convergence of $\nu$. According to Propositions 1.B and 1.C, there exists one and only one holomorphic function $H_1:\mathbb{C}^n\to \mathbb{C}$ such that $\nu=\mu^0_{H_1}$ and 
\begin{align*}
\rho(H_1)=\text{Ord}\;\nu=\limsup\limits_{r\to\infty} \frac{\log^+ N_{\mu^0_{H_1}}(r,0)}{\log r}=\limsup\limits_{r\to\infty} \frac{\log^+ N_{\mu^0_{g}}(r,0)}{\log r}=\limsup\limits_{r\to\infty} \frac{\log^+ N_{\mu^0_{f}}(r,0)}{\log r}.
\end{align*}
Now using first main theorem, we deduce that
\begin{align}\label{p1.2}
\rho(H_1)=\limsup\limits_{r\to\infty} \frac{\log^+ N_{\mu^0_{f}}(r,0)}{\log r}\leq \limsup\limits_{r\to\infty} \frac{\log^+ T(r,f)}{\log r}=\rho(f).
\end{align}
Note that $g$ and $H_1$ are entire functions in $\mathbb{C}^n$ with the same divisor $\mu^0_g=\mu^0_{H_1}$. Then there exists an entire function $\alpha_1$ in $\mathbb{C}^n$ such that $g=H_1e^{\alpha_1}$.

Again let $\nu=\mu^0_h$. Clearly $\nu$ is a non-negative divisor on $\mathbb{C}^m$ such that $0\not\in \text{supp}\;\nu$. Then there exists one and only one holomorphic function $H_2:\mathbb{C}^n\to \mathbb{C}$ such that $\nu=\mu^0_{H_2}$ and 
\begin{align}\label{p1.3}
\rho(H_2)=\limsup\limits_{r\to\infty} \frac{\log^+ N_{\mu^{\infty}_{f}}(r,0)}{\log r}\leq \limsup\limits_{r\to\infty} \frac{\log^+ T(r,f)}{\log r}=\rho(f).
\end{align}
Since $h$ and $H_2$ are entire functions in $\mathbb{C}^n$ with the same divisor $\mu^0_h=\mu^0_{H_2}$, there exists an entire function $\alpha_2$ in $\mathbb{C}^n$ such that $h=H_2e^{\alpha_2}$. If we take $\alpha=\alpha_1-\alpha_2$, then from (\ref{p1.1}), we have
\begin{align*}
f=\frac{H_1}{H_2}e^{\alpha},
\end{align*}
where $H_1$ and $H_2$ are canonical functions such that $\mu^0_f=\mu^0_{H_1}$, $\mu^{\infty}_f=\mu^0_{H_2}$ and (\ref{p1.2}) and (\ref{p1.3}) hold and $\alpha$ is an entire function in $\mathbb{C}^n$ such that $\rho(e^{\alpha})\leq \rho(f)$. Since $\rho(f)$ is finite, according to Lemma 2.B, $\alpha$ must be a polynomial in $\mathbb{C}^n$ such that $\deg(\alpha)=\rho(e^{\alpha})\leq \rho(f)$.
\end{proof}

\begin{propo}\label{Pro2} Let $h$ be a non-constant polynomial in $\mathbb{C}^n$ and let $f=e^{h}$. Then $\rho(f)=\mu(f)=\deg(h)$.
\end{propo}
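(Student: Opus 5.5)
By Lemma 2.B we already have $\rho(f)=\deg(h)$. Since $\mu(f)\le\rho(f)$ always holds, the plan reduces to proving $\mu(f)\ge\deg(h)$. This is a matter of getting a lower bound for $T(r,f)$ of order $r^{d}$, where $d=\deg(h)$, valid for \emph{all} large $r$ and not just along a sequence. Because $f$ is entire, $N(r,f)=0$, so
\begin{align*}
T(r,f)=m(r,f)=\int_{\mathbb{C}^n\langle r\rangle}\log^+|e^{h}|\,\sigma_{\mathbb{C}^n}=\int_{\mathbb{C}^n\langle r\rangle}(\operatorname{Re} h)^+\,\sigma_{\mathbb{C}^n}.
\end{align*}

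Write $h=P_d+Q$, where $P_d\not\equiv 0$ is the homogeneous part of degree $d$ and $\deg Q\le d-1$. Put $z=r\xi$ with $\|\xi\|=1$. Then $\operatorname{Re}h(r\xi)=r^{d}\operatorname{Re}P_d(\xi)+O(r^{d-1})$, uniformly in $\xi$ on the unit sphere. Since $(x+y)^+\ge x^+-|y|$ and $\sigma_{\mathbb{C}^n}$ restricted to the sphere of radius $r$ is the normalized rotation-invariant probability measure, this gives
\begin{align*}
T(r,f)\ge r^{d}\int_{\mathbb{C}^n\langle 1\rangle}(\operatorname{Re}P_d(\xi))^+\,\sigma_{\mathbb{C}^n}(\xi)-O(r^{d-1}).
\end{align*}

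The key step is to show that the constant $c:=\int_{\mathbb{C}^n\langle 1\rangle}(\operatorname{Re}P_d)^+\sigma>0$. I would do this by a pluriharmonicity argument. First, $\operatorname{Re}P_d$ is continuous and is not identically $0$ on the unit sphere; otherwise $P_d$ would vanish on $\mathbb{C}^n$ by homogeneity. Second, it changes sign there: for any $\xi$ with $P_d(\xi)\ne0$, the map $\theta\mapsto P_d(e^{i\theta}\xi)=e^{id\theta}P_d(\xi)$ has real part taking positive values for suitable $\theta$. A continuous function that is positive at some point is positive on an open set, so $(\operatorname{Re}P_d)^+>0$ on a nonempty open subset of the sphere, and hence $c>0$. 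An alternative route is the mean-value identity $\int\operatorname{Re}P_d\,\sigma=\operatorname{Re}P_d(0)=0$ together with $\operatorname{Re}P_d\not\equiv0$, which forces the positive part to have positive integral.

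With $c>0$, we get $T(r,f)\ge \tfrac{c}{2}r^{d}$ for all sufficiently large $r$. Taking $\log^+$ and dividing by $\log r$, the $\liminf$ gives $\mu(f)\ge d$. Combined with $\mu(f)\le\rho(f)=d$, this proves the claim.

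The main obstacle is the positivity of $c$, which relies on the rotational invariance of $\sigma$ and on checking that $\operatorname{Re}P_d$ genuinely takes positive values on the sphere. The uniform $O(r^{d-1})$ error is routine, since $Q$ has degree below $d$ and is bounded by $Cr^{d-1}$ on $\|z\|=r$ for $r\ge1$. If one wants to avoid the explicit sphere integral, an alternative is the upper bound $T(r,f)\le \max_{\|z\|=r}|h|\le Cr^d$, paired with the lower bound obtained by restricting to a complex line $\zeta\mapsto\zeta\xi_0$ on which $P_d(\xi_0)\neq0$. That reduces the question to the one-variable fact that $T(r,e^{a\zeta^d+\cdots})\sim \frac{|a|}{\pi}r^d$. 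However, transferring this along lines requires averaging over lines, so I would present the direct sphere computation above.
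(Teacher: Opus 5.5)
Your proof is correct, but it takes a different route from the paper's.

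\textbf{The paper's route.} The paper gets the upper bound $T(r,f)=m(r,e^h)=O(r^{\deg h})$ as you do. For the lower bound it restricts $f$ to a complex line $j_\xi(\zeta)=\zeta\xi$ and uses the one-variable fact $\rho(e^{h_\xi})=\mu(e^{h_\xi})=\deg h_\xi$. It then transfers this back to $\mathbb{C}^n$ through the comparison inequality $T_{f_\xi}(r,0)\le \frac{1+\theta}{(1-\theta)^{2n-1}}T_f(r/\theta,0)$ from Hu--Li--Yang, which gives $\mu(f_\xi)\le\mu(f)$. So, contrary to the worry at the end of your proposal, no averaging over lines is needed. A single line with $P_d(\xi)\neq 0$ and $\|\xi\|\le 1$ suffices once that inequality is available. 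The paper's ``clearly $\deg h_\xi=\deg h$'' holds only for such $\xi$, but that is all it needs.

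\textbf{Your route.} You compute $T(r,e^h)=\int_{\mathbb{C}^n\langle r\rangle}(\operatorname{Re}h)^+\sigma$ directly. The ingredients are the dilation invariance of $\sigma$, the bound $(x+y)^+\ge x^+-|y|$, and the fact that $\operatorname{Re}P_d$ is positive on a nonempty open subset of the unit sphere, which has positive $\sigma$-measure.

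\textbf{What each buys.} Your argument is self-contained: it needs neither the one-variable theorem nor the restriction inequality. Your upper bound $T(r,f)\le\max_{\|z\|=r}|h|$ even makes the appeal to Lemma 2.B unnecessary. It also yields more: using $(x+y)^+\le x^++|y|$ as well, you get $T(r,e^h)=c\,r^{d}+O(r^{d-1})$ with the explicit constant $c=\int_{\mathbb{C}^n\langle 1\rangle}(\operatorname{Re}P_d)^+\sigma>0$, so $e^h$ is of mean type. The paper's approach is instead a general transfer principle. It turns any one-variable lower growth bound into a several-variable one, and it applies to meromorphic functions where no closed formula for $m(r,f)$ is available.
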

\begin{proof} Note that $|h(z)|\leq C||z||^{\deg(h)}$ holds for $||z||>1$, where $C$ is a positive constant. Now for sufficiently large value of $r$, we have $T(r,f)=m\left(r,e^h\right)=O\left(r^{\deg(h)}\right)$. Therefore it is easy to verify that $\rho(f)\leq \deg(h)$ and $\mu(f)\leq \deg(h)$. Take $\xi\in\mathbb{C}^n\backslash \{0\}$. Define a holomorphic mapping $j_{\xi}:\mathbb{C}\to \mathbb{C}^n$ by $j_{\xi}(z)=z\xi$. Clearly
\begin{align*}
f_{\xi}=f\circ j_{\xi}=e^{h\circ j_{\xi}}=e^{h_{\xi}}:\mathbb{C}\to \mathbb{C}
\end{align*}
is a holomorphic mapping. Clearly $\deg(h_{\xi})=\deg(h)$. But in $\mathbb{C}$, we know that (see Theorem 1.44 \cite{Yang-Yi-2003}) $\rho(f_{\xi})=\mu(f_{\xi})=\deg(h_{\xi})=\deg(h)$. On the other hand we know that (see \cite[pp.286]{Hu-Li-Yang-2003})
\begin{align}\label{pro2:1.1} 
T_{f_{\xi}}(r,0)\leq \frac{1+\theta}{(1-\theta)^{2n-1}} T_{f}\left(\frac{r}{\theta},0\right),\quad \xi\in\mathbb{C}^n[0,1]\backslash \{0\}
\end{align}
where $0<\theta<1$. Now inequality (\ref{pro2:1.1}) implies that
$\rho(f_{\xi})\leq \rho(f)$ and $\mu(f_{\xi})\leq \mu(f)$. Consequently, we have $\deg(h)\leq \rho(f)\leq \deg(h)$ and $\deg(h)\leq \mu(f)\leq \deg(h)$. Finally, we have $\mu(f)=\rho(f)=\deg(h)$.
\end{proof}

\begin{propo}\label{Pro3} Let $f$ be a transcendental meromorphic function in $\mathbb{C}^n$ and $\rho(f)>0$ be finite. If $f$ has two distinct Borel exceptional value, say $a_1$ and $a_2$ such that $f(0)\neq a_1,a_2$ then $\rho(f)=\mu(f)\in\mathbb{N}$.
\end{propo}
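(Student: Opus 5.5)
First I reduce to the case where the two Borel exceptional values are $0$ and $\infty$. Let $L$ be a M\"obius transformation with $L(a_1)=0$ and $L(a_2)=\infty$, and set $g=L\circ f$. By Proposition 1.A(a), $\rho(g)=\rho(f)$. Since $T(r,g)=T(r,f)+O(1)$, also $\mu(g)=\mu(f)$. The divisors transform as $\mu^0_g=\mu^{a_1}_f$ and $\mu^\infty_g=\mu^{a_2}_f$, so $0$ and $\infty$ are Borel exceptional values of $g$. The hypothesis $f(0)\neq a_1,a_2$ gives $g(0)\neq 0,\infty$. So I may assume $a_1=0$, $a_2=\infty$ and $f(0)\neq0,\infty$.

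Next I apply Proposition \ref{Pro1} to write $f=\frac{H_1}{H_2}e^{\alpha}$. Here $\alpha$ is a polynomial and $H_1,H_2$ are canonical functions with
\[
\rho(H_1)=\operatorname{Ord}\mu^0_f<\rho(f),\qquad \rho(H_2)=\operatorname{Ord}\mu^\infty_f<\rho(f),
\]
the strict inequalities coming from the Borel exceptional hypothesis. Set $\Phi=H_1/H_2$. By Proposition 1.A(b),(a), $\rho(\Phi)\le\max\{\rho(H_1),\rho(H_2)\}<\rho(f)$. If $\alpha$ were constant, then $\rho(f)=\rho(\Phi)<\rho(f)$, which is impossible. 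Hence $\alpha$ is a non-constant polynomial, and by Proposition \ref{Pro2}, $\rho(e^\alpha)=\mu(e^\alpha)=\deg(\alpha)\in\mathbb{N}$.

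Now I compare $f=\Phi e^\alpha$ with $e^\alpha$. Write $e^\alpha=f\cdot\Phi^{-1}$. Proposition 1.A(b) gives $\rho(f)\le\max\{\rho(\Phi),\rho(e^\alpha)\}$, and since $\rho(\Phi)<\rho(f)$ this forces $\rho(f)\le\deg\alpha$. Conversely, $\rho(e^\alpha)\le\max\{\rho(f),\rho(\Phi)\}=\rho(f)$. Hence $\rho(f)=\deg\alpha$. For the lower order, $\rho(\Phi)<\rho(f)=\deg\alpha=\mu(e^\alpha)$, so Proposition 1.A(f) gives $\mu(f)=\mu(\Phi e^\alpha)\le\mu(e^\alpha)=\deg\alpha$. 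For the reverse inequality, Proposition 1.A(g) gives $T(r,\Phi)=o(T(r,e^\alpha))$. Then
\[
T(r,f)\ge T(r,e^\alpha)-T(r,\Phi)-O(1)=(1-o(1))T(r,e^\alpha),
\]
so $\mu(f)\ge\mu(e^\alpha)=\deg\alpha$. Together these give $\mu(f)=\rho(f)=\deg\alpha\in\mathbb{N}$.

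The main obstacle is the lower-order step. It relies on Proposition 1.A(g) holding for all large $r$, not only outside an exceptional set, together with the standard inequality $T(r,uv)\le T(r,u)+T(r,v)$ in $\mathbb{C}^n$. The argument needs the strict inequality $\rho(\Phi)<\mu(e^\alpha)$, and this is exactly what $\mu(e^\alpha)=\deg\alpha$ from Proposition \ref{Pro2} supplies. A smaller point to check is that the M\"obius reduction preserves $\mu(f)$; this follows from $T(r,L\circ f)=T(r,f)+O(1)$.
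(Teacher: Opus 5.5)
Your proof is correct and follows the paper's argument essentially step for step: the M\"obius reduction, the factorization from Proposition \ref{Pro1}, Proposition \ref{Pro2} applied to $e^{\alpha}$, and Proposition 1.A(b) in both directions to obtain $\rho(f)=\deg\alpha$. The differences are minor. You explicitly check that $\alpha$ is non-constant before invoking Proposition \ref{Pro2}, which the paper leaves implicit. For the lower order you use Proposition 1.A(f) together with 1.A(g) and subadditivity of $T$, whereas the paper applies Proposition 1.A(e) twice, to $f=He^{P}$ and to $e^{P}=f\cdot\frac{1}{H}$.
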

\begin{proof} Without loss of generality, we assume that $a_1=0$ and $a_2=\infty$ otherwise it enough to study the function $\frac{f-a_1}{f-a_2}$. According to Proposition \ref{Pro1}, we have
\begin{align}\label{Pro3:1.1}
f=\frac{H_1}{H_2}e^{P},
\end{align}
where $H_1$ and $H_2$ are canonical functions such that
\begin{align}\label{Pro3:1.2}
\rho(H_1)=\limsup\limits_{r\to \infty}\frac{\log^+ N_{\mu^0_{H_1}}(r,0)}{\log r}=\limsup\limits_{r\to \infty}\frac{\log^+ N_{\mu^0_{f}}(r,0)}{\log r}<\rho(f),
\end{align}
\begin{align}\label{Pro3:1.3}
\rho(H_2)=\limsup\limits_{r\to \infty}\frac{\log^+ N_{\mu^0_{H_2}}(r,0)}{\log r}=\limsup\limits_{r\to \infty}\frac{\log^+ N_{\mu^{\infty}_{f}}(r,0)}{\log r}<\rho(f)
\end{align}
and $P$ is a polynomial in $\mathbb{C}^n$ such that $\deg(P)=\rho(e^P)\leq \rho(f)$. If we take 
\begin{align}\label{Pro3:1.4}
H=\frac{H_1}{H_2},
\end{align}
then from (\ref{Pro3:1.1}), we have 
\begin{align}\label{Pro3:1.5}
e^P=\frac{f}{H}.
\end{align}
Now by Proposition \ref{Pro2}, we have $\mu(e^P)=\rho(e^P)=\deg(P)$. Note that by Proposition 1.A(a), we have $\rho(1/H)=\rho(H)$.
Again by Proposition 1.A(b), we have from (\ref{Pro3:1.2})-(\ref{Pro3:1.4}) that
\begin{align*}
\rho(H)\leq \max\{\rho(H_1),\rho(H_2)\}<\rho(f).
\end{align*}
and so from (\ref{Pro3:1.5}), we have
\begin{align*}
\rho(e^P)\leq \max\{\rho(H),\rho(f)\}=\rho(f).
\end{align*}
Again from (\ref{Pro3:1.5}), we get $f=He^P$ and so by Proposition 1.A(b), we have
\begin{align*}
\rho(f)\leq \max\{\rho(e^P),\rho(H)\}=\rho(e^P).
\end{align*}
Therefore $\rho(f)=\rho(e^P)$ and so $\rho(H)<\mu(e^P)$. Since $f=He^P$, using Proposition 1.A(e), we get $\mu(f)\leq \max\{\rho(H),\mu(e^P)\}=\mu(e^P)$. 
Since $e^P=f.\frac{1}{H}$, using Proposition 1.A(e), we have 
\begin{align*}
\mu(e^P)\leq \max\{\rho(H),\mu(f)\}\leq \max\{\rho(H),\mu(e^P)\}=\mu(e^P),
\end{align*}
i.e., $\max\{\rho(H),\mu(f)\}=\mu(e^P)$. As $\rho(H)<\mu(e^P)$, it follows that $\mu(f)=\mu(e^P)=\rho(e^P)=\rho(f)$. Therefore $\mu(f)=\rho(f)=\rho(e^P)=\deg(P)\in\mathbb{N}$.
\end{proof}

\begin{propo}\label{Pro4} Let $f$ be a transcendental meromorphic function in $\mathbb{C}^n$ and $\rho(f)>0$ be finite. If $f$ has two distinct Borel exceptional value, say $a_1$ and $a_2$ such that $f(0)\neq a_1,a_2$, then $$\delta(a_1;f)=\delta(a_2;f)=1.$$
\end{propo}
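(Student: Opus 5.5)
We prove $\delta(a_j;f)=1$ by showing that $N(r,a_j;f)=o(T(r,f))$ as $r\to\infty$. Since $0\le\delta\le 1$ always, this gives $1-\limsup N(r,a_j;f)/T(r,f)=1$.

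\emph{Reduction.} As in the proof of Proposition \ref{Pro3}, we replace $f$ by the M\"obius transform $g=\frac{f-a_1}{f-a_2}$ (or $g=f-a_1$ if $a_2=\infty$, and symmetrically if $a_1=\infty$). The $0$- and $\infty$-divisors of $g$ are exactly $\mu^{a_1}_f$ and $\mu^{a_2}_f$. By the first main theorem for M\"obius transformations, $T(r,g)=T(r,f)+O(1)$, so $\rho(g)=\rho(f)$ and $\mu(g)=\mu(f)$ by Proposition 1.A(a). Deficiencies are preserved under this change, so it suffices to treat $a_1=0$, $a_2=\infty$ with $f(0)\neq 0,\infty$.

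\emph{Key step.} By Proposition \ref{Pro3}, $\mu(f)=\rho(f)=:\rho\in\mathbb{N}$. Since $0$ is a Borel exceptional value,
\[
s_1:=\limsup_{r\to\infty}\frac{\log^+ N(r,0;f)}{\log r}<\rho .
\]
Fix $\varepsilon>0$ with $s_1+\varepsilon<\rho-\varepsilon$. For all large $r$ we have $N(r,0;f)\le r^{s_1+\varepsilon}$. Because the lower order equals $\rho$, we also have $T(r,f)\ge r^{\rho-\varepsilon}$ for all large $r$. Hence
\[
\frac{N(r,0;f)}{T(r,f)}\le r^{s_1+2\varepsilon-\rho}\to 0 .
\]
This gives $\delta(0;f)=1$. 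The identical argument with $N(r,\infty;f)$ and $s_2<\rho$ gives $\delta(\infty;f)=1$. Note that Proposition 1.A(g) does not apply directly here, because $N$ is not the characteristic of a meromorphic function. However, one can alternatively write $N(r,0;f)=N(r,0;H_1)\le T(r,H_1)+O(1)$ using Proposition \ref{Pro1}. Then $\rho(H_1)<\rho=\mu(f)$, and Proposition 1.A(g) gives $T(r,H_1)=o(T(r,f))$.

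\emph{Main obstacle.} The whole argument rests on the lower-order bound $T(r,f)\ge r^{\mu(f)-\varepsilon}$ holding for \emph{all} large $r$, not merely along a sequence. This is exactly why we need $\mu(f)=\rho(f)$ from Proposition \ref{Pro3}. The rest is bookkeeping: checking that $T$ and $N$ transform correctly under the M\"obius change of variable, up to $O(1)$ and normalization constants in $N(r,s)$. These constants are harmless because $T(r,f)\to\infty$.
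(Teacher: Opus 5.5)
Your proof is correct and follows the paper's strategy: reduce to $a_1=0$, $a_2=\infty$, invoke Proposition~\ref{Pro3} to get $\mu(f)=\rho(f)$, and conclude $N(r,a_j;f)=o(T(r,f))$. The paper does the last step via the factorization $f=\frac{H_1}{H_2}e^{P}$ of Proposition~\ref{Pro1}, Proposition 1.A(g) applied to $H=H_1/H_2$, and the first main theorem, which is essentially your stated alternative; your main line instead compares $N(r,a_j;f)\le r^{s_j+\varepsilon}$ with $T(r,f)\ge r^{\rho-\varepsilon}$ directly, which avoids the canonical functions and does not even need the M\"obius reduction.
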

\begin{proof} Without loss of generality, we assume that $a_1=0$ and $a_2=\infty$ otherwise it enough to study the function $\frac{f-a_1}{f-a_2}$. According to Proposition \ref{Pro1}, we have
\begin{align}\label{Pro4:1.1}
f=\frac{H_1}{H_2}e^{P},
\end{align}
where $H_1$ and $H_2$ are canonical functions such that (\ref{Pro3:1.2}) and (\ref{Pro3:1.3}) hold and $P$ is a polynomial in $\mathbb{C}^n$ such that $\deg(P)=\rho(e^P)\leq \rho(f)$. Now by proposition \ref{Pro3}, we have $\mu(f)=\rho(f)$. If we take $H=H_1/H_2$, then using Proposition 1.A(b), we have from (\ref{Pro3:1.2})-(\ref{Pro3:1.4}) that
\begin{align*}
\rho(H)\leq \max\{\rho(H_1),\rho(H_2)\}<\rho(f)=\mu(f).
\end{align*}
Again using Proposition 1.A(g), we conclude that $T(r,H)=o(T(r,f))$. Now in view of first main theorem and using (\ref{Pro3:1.2}) and (\ref{Pro3:1.3}), we have respectively $N(r,0;f)=o(T(r,f))$ and $N(r,\infty;f)=o(T(r,f))$. Consequently $\delta(a_1;f)=\delta(a_2;f)=1$.
\end{proof}

\begin{table}[htbp]
	\centering
	\renewcommand{\arraystretch}{1.2}
	
	\begin{tabularx}{\textwidth}{||>{\centering\arraybackslash}p{3.2cm}||
			>{\centering\arraybackslash}p{4.8cm}||
			>{\centering\arraybackslash}p{6.0cm}||}
		\hline\hline
		\textbf{Result} & \textbf{Main Tool Used} & \textbf{Purpose} \\
		\hline\hline
		Lemma~\protect\ref{L.2} & Second Main Theorem & Growth comparison \\
		\hline
		Proposition~\protect\ref{pro0} & Lemma~\protect\ref{L.2} & Maximum two exceptional values \\
		\hline
		Proposition~\protect\ref{Pro1} & Canonical representation & Structural form of $f$ \\
		\hline
		Proposition~\protect\ref{Pro2} & One-variable reduction & Growth of $e^h$ \\
		\hline
		Proposition~\protect\ref{Pro3} & Propositions~\protect\ref{Pro1}, \protect\ref{Pro2} & $\rho(f)=\mu(f)$ \\
		\hline
		Proposition~\protect\ref{Pro4} & Proposition~\protect\ref{Pro3} & Deficiency equals one \\
		\hline\hline
	\end{tabularx}
	\vspace{.5cc}
\caption{Summary of results, tools and purposes}
\end{table}

\section{{\bf Shared Value Problems Associated with Shifts}}

Throughout this paper, we work in the complex Euclidean space $\mathbb{C}^n$.  
For $i=1,2,\ldots,n$, we denote
\begin{align*}
	\partial_{z_i}=\frac{\partial}{\partial z_i}, \qquad 
	\partial_{z_i}^{l_i}=\frac{\partial^{l_i}}{\partial z_i^{l_i}},
\end{align*}
and for a multi-index $I=(i_1,i_2,\ldots,i_n)\in \mathbb{Z}_+^n$, we define
\begin{align*}
	\partial^{I}
	=\frac{\partial^{|I|}}{\partial z_1^{i_1}\partial z_2^{i_2}\cdots \partial z_n^{i_n}},
\end{align*}
where $|I|=\sum_{j=1}^{n} i_j$ and $l_i\in \mathbb{Z}^+$.
For a subset $E\subset [0,\infty)$, we introduce the linear measure
\[
m(E):=\int_E dt,
\]
the logarithmic measure
\[
l(E):=\int_{E\cap [1,\infty)} \frac{dt}{t},
\]
and the upper density
\begin{align*}
	\overline{\mathrm{dens}}\;E
	=\limsup_{r\to\infty}\frac{1}{r}\int_{E\cap [1,r]} dt.
\end{align*}
It follows that if $l(E)<+\infty$ or $m(E)<+\infty$, then $E$ has zero upper density.

Let
\[
\mathcal{S}(f)
=\bigl\{ g:\mathbb{C}^n\to\mathbb{P}^1 \text{ meromorphic } :
T(r,g)=o\!\left(T(r,f)\right) \bigr\},
\]
where $r\to\infty$ outside a possible exceptional set of finite linear measure.

\smallskip
Let $f$, $g$, and $a$ be meromorphic functions on $\mathbb{C}^n$. Then there exist pairs of entire functions $(f_1,f_2)$, $(g_1,g_2)$, and $(a_1,a_2)$, each coprime at every point in $\mathbb{C}^n$, such that
\[
f=\frac{f_2}{f_1}, \qquad
g=\frac{g_2}{g_1}, \qquad
a=\frac{a_2}{a_1}.
\]
We say that $f$ and $g$ \emph{share the value $a$ counting multiplicities (CM)} if
\[
\mu^0_{a_1f_2-a_2f_1}
=\mu^0_{a_1g_2-a_2g_1}
\quad (a\not\equiv\infty),\;\;\text{and}\;\;
\mu^0_{f_1}=\mu^0_{g_1}
\quad (a=\infty).
\]
Similarly, $f$ and $g$ are said to \emph{share the value $a$ ignoring multiplicities (IM)} if
\[
\mu^0_{a_1f_2-a_2f_1,1}
=\mu^0_{a_1g_2-a_2g_1,1}
\quad (a\not\equiv\infty),\;\;\text{and}\;\;
\mu^0_{f_1,1}=\mu^0_{g_1,1}
\quad (a=\infty).
\]

\vspace{1.2mm}

Let $f$ be a meromorphic function in $\mathbb{C}^n$. For $z=(z_1,z_2,\ldots,z_n)$ and
$c=(c_1,c_2,\ldots,c_n)\in\mathbb{C}^n$, we write $z+c=(z_1+c_1,z_2+c_2,\ldots,z_n+c_n).$
The shift of $f$ by $c$ is defined as $f(z+c)=f(z_1+c_1,z_2+c_2,\ldots,z_n+c_n)$,
and the corresponding difference operator is given by
\[
\Delta_c f(z)=f(z+c)-f(z).
\]
Inductively, for $m\ge2$, we define the higher-order difference operator by
\begin{align*}
	\Delta_c^m f(z)=\Delta_c^{m-1}f(z+c)-\Delta_c^{m-1}f(z)=\Delta_c\circ\Delta_c^{m-1}f(z),
\end{align*}
where $m\in\mathbb{N}$.

\smallskip
%The classical Nevanlinna value distribution theory, introduced in the 1920s by Rolf Nevanlinna, studies the manner in which meromorphic functions of a single complex variable assume their values. It provides profound insights into the growth, uniqueness, and value-sharing properties of meromorphic functions and has become a cornerstone of complex analysis.

With the development of complex analysis in higher dimensions, mathematicians sought to extend Nevanlinna's foundational results to functions of several complex variables. This endeavor led to the emergence of Nevanlinna theory in several complex variables, which investigates the growth and value distribution of holomorphic and meromorphic mappings from $\mathbb{C}^n$ into complex manifolds and complex projective spaces. 
In recent years, Nevanlinna theory in several complex variables has evolved into a dynamic and rapidly advancing field of research (see \cite{BM}, \cite{ck}-\cite{CLL}, \cite{PVD2}, \cite{HZ1}, \cite{Korhonen-2012}-\cite{LS1}, \cite{FL1}-\cite{FL}, \cite{Liu-Zhang-2018}, \cite{MD1}, \cite{MDP}, \cite{MS}, \cite{MS1}, \cite{MSP}, \cite{Wang-Liu-2023}). 

A particularly active area concerns the application of value distribution theory to problems involving normal families, partial differential equations (PDEs), partial difference equations, and partial differential-difference equations.

In 2012, Korhonen \cite{Korhonen-2012} established the first difference analogue of the logarithmic derivative lemma, known as the \emph{logarithmic difference lemma}, for meromorphic functions in $\mathbb{C}^m$ with hyper-order strictly less than $\frac{2}{3}$. Subsequently, Cao and Korhonen \cite{ck} extended this result to the case where the hyper-order is strictly less than $1$. These foundational results play a crucial role in the study of meromorphic solutions of complex difference equations and in the uniqueness theory of difference operators in higher dimensions.

We begin by recalling a fundamental result of Heittokangas et al. \cite{Heittokangas-Korhonen-Laine-Rieppo-Zhang-2009}, which establishes a strong structural behavior of meromorphic functions of small order. 
\begin{theo3A} \cite[Theorem 1]{Heittokangas-Korhonen-Laine-Rieppo-Zhang-2009} Let $f$ be a non-constant meromorphic function in $\mathbb{C}$ such that $\rho(f)<2$ and let $c$ be a non-zero complex number. If $f(z+c)$ and $f(z)$ share $a\in\mathbb{C}$ and $\infty$ CM, then $f(z+c)-a=d(f(z)-a)$ for all $z\in\mathbb{C}$, where $d$ is some non-zero complex number.
\end{theo3A}
The following example demonstrates that the order condition ``$\rho(f)<2$'' of Theorem 3.A is indispensable.
\begin{exm3A}\cite{Heittokangas-Korhonen-Laine-Rieppo-Zhang-2009} Let $f(z)=e^{z^{2}} + 1$, and let $c$ be a non-zero finite complex value. Then it immediately follows that $f(z+c)-1=(f(z)-1)e^{2zc+c^{2}}$ for all $z \in \mathbb{C}$. Moreover, we find that
$N(r,1;f)=0$, and hence $1$ is a Borel exceptional value of $f$.
\end{exm3A}

Motivated by the role of Borel exceptional values, for entire functions of finite order, Huang and Zhang \cite{Huang-Zhang-2018} obtained the following refinement. 
\begin{theo3B}\cite[Theorem 2.3]{Huang-Zhang-2018} Let $f(z)$ be a finite-order transcendental entire function in $\mathbb{C}$ with a Borel exceptional value $\alpha \in \mathbb{C}$, let $c \in \mathbb{C}$, and let $a(z) \,(\not\equiv \alpha) \in \mathcal{S}(f)$.
If $f(z+c)$ and $f(z)$ share $a(z)$ CM, then $f(z+c) \equiv f(z)$.
\end{theo3B}

In the same spirit, Huang and Zhang also established a corresponding result involving higher-order difference operators.
\begin{theo3C}\cite[Theorem 2.6.]{Huang-Zhang-2018} Let $f$ be a transcendental entire function in $\mathbb{C}$ such that $\rho(f)<2$.
If $\Delta_c^{k} f(z)$ and $f(z)$ share $0$ CM, where $k \in \mathbb{N}$ and
$c\in\mathbb{C}\backslash \{0\}$ are such that $\Delta_c^{k} f(z) \not\equiv 0$, then 
$\Delta_c^{k} f(z) \equiv df(z)$ for all $z\in\mathbb{C}$, where $d$ is some non-zero complex number.
\end{theo3C}

Recently, Majumder and Das \cite{MD1} extended Theorem 3.A to functions of several complex variables.
\begin{theo3D}\cite[Thorem 1.4]{MD1} Let $f$ be a non-constant meromorphic function in $\mathbb{C}^n$ such that $\rho(f)<2$ and let $c\in\mathbb{C}^n\backslash \{0\}$. If $f(z+c)$ and $f(z)$ share $a\in\mathbb{C}$ and $\infty$ CM, then $f(z+c)-a=d(f(z)-a)$ for all $z\in\mathbb{C}^n$, where $d$ is some non-zero complex number.
\end{theo3D}

The following example confirms that the condition ``$\rho(f)<2$''remains necessary in higher dimensions.
\begin{exm}\cite{MD1}
Let $f(z)=f(z_1,z_2)=e^{z_1^2+z_2^2}+1$ and $c=(c_1,c_2)=(1,0)$. Clearly $f(z+c)$ and $f(z)$ share $1$ and $\infty$ CM, but $f(z+c)-1=e^{2z_1+1}(f(z)-1)$.
\end{exm}

We now present our first main contribution of this paper.
\begin{theo}\label{t2.2} Let $f$ be a transcendental meromorphic function in $\mathbb{C}^n$ such that $\rho(f)<2$.
If $\Delta_c^{k} f(z)$ and $f(z)$ share $0$ and $\infty$ CM, where $k \in \mathbb{N}$ and
$c\in\mathbb{C}^n \backslash \{0\}$ are such that $\Delta_c^{k} f(z) \not\equiv 0$, then $\rho(f)\geq 1$ and
$\Delta_c^{k} f(z) \equiv df(z)$ for all $z\in\mathbb{C}^n$, where $d$ is some non-zero complex number.
\end{theo}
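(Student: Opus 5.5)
Since $\Delta_c^{k}f$ and $f$ share $0$ and $\infty$ CM, the quotient
\begin{align*}
\frac{\Delta_c^{k} f(z)}{f(z)}
\end{align*}
is an entire function in $\mathbb{C}^n$ with no zeros. So there is an entire function $P$ with $\Delta_c^{k}f=e^{P}f$. The plan is to show first that $P$ is a polynomial of degree at most $1$. Then I rule out $\deg P=1$ by a growth and Borel exceptional value argument. Finally I rule out $\rho(f)<1$.

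\textbf{Step 1: $P$ is a polynomial of degree at most $1$.} Write $e^{P}=\Delta_c^{k}f/f$. Then $\rho(e^P)\le\rho(f)<2$, by Proposition 1.A together with $\rho(f(z+jc))=\rho(f)$. A shift does not change the order, which follows from the standard comparison $T(r,f(z+c))\le (1+o(1))T(r+|c|,f)$ in $\mathbb{C}^n$. Lemma 2.B then forces $P$ to be a polynomial, and Proposition \ref{Pro2} gives $\deg P=\rho(e^P)<2$. Hence $P(z)=\langle a,z\rangle+b$ for some $a\in\mathbb{C}^n$ and $b\in\mathbb{C}$.

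\textbf{Step 2: $\deg P=0$, which is the main obstacle.} Suppose $\deg P=1$. Expanding the difference operator gives
\begin{align*}
\sum_{j=0}^{k}\binom{k}{j}(-1)^{k-j}f(z+jc)=e^{P(z)}f(z).
\end{align*}
I would compare growth using the logarithmic difference lemma of Korhonen and Cao--Korhonen, which is valid here because $f$ has finite order and hence hyper-order $0$. It gives
\begin{align*}
m\left(r,\frac{\Delta_c^k f}{f}\right)=o(T(r,f))
\end{align*}
outside an exceptional set of finite logarithmic measure. Since $\Delta_c^k f/f=e^P$ has no poles, this yields $T(r,e^P)=o(T(r,f))$. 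Therefore $r=O(T(r,f))$ is impossible only if $\rho(f)\le 1$ in an appropriate sense, so this alone does not finish the case.

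To get a contradiction I would restrict to complex lines. For $\xi\in\mathbb{C}^n$, set $f_\xi(t)=f(z_0+t\xi)$, which I expect to work cleanly when $\xi$ is parallel to $c$. Along the line $z_0+tc$ the relation becomes a one-variable equation $\Delta_1^k g(t)=e^{\alpha t+\beta}g(t)$, where $\alpha=\langle a,c\rangle$.

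If $\alpha\neq0$ on a generic line, I would invoke the one-variable argument of Huang--Zhang (Theorem 3.C) and its meromorphic analogue. There one shows that $e^{P}$ must reduce to a constant: the $f(z+jc)$ have the same growth as $f$, and a Nevanlinna estimate on $e^{P}$ shows it is small with respect to $T(r,f)$ while having order $1$. Combined with $\rho(f)<2$, this forces a contradiction unless $\alpha=0$.

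If $\alpha=0$, then $e^{P}$ is $c$-periodic. I would then use a Borel exceptional value argument via Propositions \ref{pro0}--\ref{Pro4}. Since $P$ is linear, $e^{P}$ has $0$ and $\infty$ as Picard, hence Borel, exceptional values, and its order is $1=\mu(e^P)$. Comparing the characteristic functions and deficiencies of $\Delta^k_c f$ and $e^Pf$ should then give the contradiction.

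This step, forcing $e^P$ to be constant, is where I expect the real work. I would try first the line-restriction reduction to the one-variable Theorem 3.C.

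\textbf{Step 3: $\rho(f)\ge1$.} Once $e^P\equiv d$, we have $\Delta_c^kf=df$ with $d\neq0$. If $\rho(f)<1$, I would apply the one-variable fact on a line in direction $c$. A transcendental meromorphic $g$ of order less than $1$ satisfies $\Delta^k g(t)/g(t)\to 0$ outside a small exceptional set; this is the difference analogue of $g(t+1)/g(t)\to1$ for order less than $1$. That gives $d=0$, a contradiction. Hence $\rho(f)\ge1$.
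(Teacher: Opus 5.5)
\textbf{Where the proposal falls short.} Your Steps 1 and 3 are essentially the paper's. For Step 1 the paper also uses Lemma 2.B. For Step 3 the paper applies Lemma~\ref{L2.6} on $\mathbb{C}^n$ directly rather than restricting to a line, but the idea is the same: $f(z+jc)/f(z)\to1$, so $d=(1-1)^k=0$.

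The gap is Step 2, which is the real content of the theorem, and as written it contains no argument. You rightly observe that $m(r,\Delta_c^kf/f)=o(T(r,f))$ yields nothing.

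Your line restriction then fails exactly in the case you isolate. If $\langle a,c\rangle=0$, then on every line $z_0+tc$ the factor $e^{P}$ is the constant $e^{P(z_0)}$. The restricted equation $\Delta_1^kg=e^{P(z_0)}g$ is then a constant-coefficient equation. For generic $z_0$ it has order-one solutions $e^{\gamma t}$ with $(e^{\gamma}-1)^k=e^{P(z_0)}$. So no line-by-line argument can produce a contradiction: the growth that is forced on $f$ comes from how $e^{P(z_0)}$ varies transversally to $c$.

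The proposed comparison of deficiencies is not an argument either, because $0$ and $\infty$ being Picard values of $e^P$ imposes nothing on $f$. Even when $\langle a,c\rangle\neq0$, the mechanism you attribute to the one-variable theorem is not contradictory, as you noticed yourself. That mechanism is ``$e^P$ is small compared with $T(r,f)$ but has order $1$, together with $\rho(f)<2$''. Also, Theorem 3.C covers only entire functions.

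\textbf{The missing idea.} What you need is a global, quantitative estimate on $\mathbb{C}^n$. The paper expands $\Delta_c^kf=\sum_{j=0}^k(-1)^{k-j}\binom{k}{j}f(z+jc)$, so that $\Delta_c^kf=e^Pf$ becomes
$f(z+kc)+\sum_{j=1}^{k-1}(-1)^{k-j}\binom{k}{j}f(z+jc)+\bigl[(-1)^k-e^{P}\bigr]f(z)=0$.
The coefficient of $f(z)$ has order $1$, while all the other coefficients are constants. The dominant-coefficient result, Lemma~\ref{L2.2}, then gives $\rho(f)\ge\rho(e^P)+1=2$, contradicting $\rho(f)<2$.

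Alternatively, you can use the sharp finite-order logarithmic difference lemma, Lemma~\ref{L2.1}. This route avoids the entire-solution hypothesis of Lemma~\ref{L2.2}, which fits the meromorphic statement better. It gives $m(r,f(z+jc)/f(z))=O(r^{\rho(f)-1+\varepsilon})$, hence $T(r,e^P)=m(r,e^P)=O(r^{\rho(f)-1+\varepsilon})$. Since $T(r,e^P)\ge Cr$ for nonconstant linear $P$, choosing $\varepsilon<2-\rho(f)$ gives a contradiction.

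Both arguments work whether or not $\langle a,c\rangle$ vanishes, which is precisely the case your reduction to lines cannot handle.
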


\begin{rem} Obviously Theorem \ref{t2.2} extends Theorem 3.C to the case of higher dimensions.
\end{rem}

Following example ensures the existence of conclusion $\Delta_c^{k} f(z) \equiv df(z)$ for all $z\in \mathbb{C}^n$
of Theorem \ref{t2.2}.

\begin{exm} Let $f:\mathbb{C}^n\rightarrow \mathbb{P}^1$ be defined by $f(z)=\frac{e^{\alpha (z_1+z_2+\cdots+z_n)}}{e^{\beta (z_1+z_2+\cdots+z_n)}+1}$, where $\alpha,\;\beta\in \mathbb{C}\backslash \{0\}$ such that $e^{n\alpha}=3$, $e^{n\beta}=1$ and $c=(1,1,\ldots,1)$. Note that $\Delta_cf(z)=\frac{2e^{\alpha (z_1+z_2+\cdots+z_n)}}{e^{\beta (z_1+z_2+\cdots+z_n)}+1}$.
Clearly $\Delta_cf(z)=2f(z)$ and so $\Delta_cf$ and $f$ share $0$ and $\infty$ CM.
\end{exm} 
\begin{exm}
	Let $f:\mathbb{C}^n \to \mathbb{P}^1$ be defined by
$f(z)=\frac{(e^{2\pi i \lambda/\mu} - 1) e^{\lambda (z_1+\cdots+z_n)}}{e^{\mu (z_1+\cdots+z_n)} - 1}, \quad \lambda,\mu \in \mathbb{C}\setminus\{0\}, \frac{\lambda}{\mu}\not\in\mathbb{Z}$
	and let $c = \left(\frac{2\pi i}{\mu},0,\dots,0\right) \in \mathbb{C}^n$. Then $\Delta_c^k f(z) = \frac{(e^{2\pi i \lambda/\mu} - 1)^{k+1}\, e^{\lambda (z_1+\cdots+z_n)}}{e^{\mu (z_1+\cdots+z_n)} - 1}.$	Here, $\Delta_c^k f(z) \equiv d\, f(z)$ with $d = (e^{2\pi i \lambda/\mu} -1)^{k+1} \neq 0$.
	Clearly: $\Delta_c^k f$  and $f$ share $0$ and $\infty$ CM and $\rho(f)=1<2$.
\end{exm}

The following examples demonstrate that even when $f(z)$ and $\Delta_c^{k} f(z)$ share a non-zero constant $a$ CM, it does not necessarily follow that
$\Delta_c^{k} f(z) \equiv df(z)$ for all $z\in \mathbb{C}^n$, where $d$ is a non-zero complex number.

\begin{exm} Let $f(z)=e^{z_1+z_2+\cdots+z_n}+e^{-(z_1+z_2+\cdots+z_n)}+2$, $c=\left(\frac{\pi i}{n},\ldots,\frac{\pi i}{n}\right)$. Note that $\Delta_cf(z)=-2(e^{z_1+z_2+\cdots+z_n}+e^{-(z_1+z_2+\cdots+z_n)})$.
Clearly  $\Delta_cf$ and $f$ share $\frac{4}{3}$ and $\infty$ CM, but $\Delta_cf\not\equiv df$.
\end{exm}
 \begin{exm}
 	Let $f(z) = \frac{e^{(z_1+\cdots+z_n)} }{e^{2 (z_1+\cdots+z_n)} +1}+1$, $c=\left(\frac{\pi i}{n},\ldots,\frac{\pi i}{n}\right)$. Note that $\Delta_c^k f(z) = (-1)^{k}2^{k}\\ \frac{e^{(z_1+\cdots+z_n)} }{e^{2 (z_1+\cdots+z_n)} +1}.$	Clearly, $\Delta_c^k f(z)$ and  $f(z)$ share  $a = \frac{1}{1+(-1)^{k+1}2^{-k}}$ and $\infty$ CM but $\Delta_c^kf\not\equiv df$.
 \end{exm}
\begin{theo}\label{t2.3} Let $f$ be a finite-order transcendental entire function in $\mathbb{C}^n$ with a Borel exceptional value $\alpha \in \mathbb{C}$ such that $f(0)\neq \alpha$, $c \in \mathbb{C}^n\backslash \{0\}$ and $a(\not\equiv \alpha)$ be an entire function such that $a\in \mathcal{S}(f)$.
If $f(z+c)$ and $f(z)$ share $a(z)$ CM, then $f(z+c) \equiv f(z)$.
\end{theo}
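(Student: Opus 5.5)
Since $f$ is entire with finite positive order (transcendental with a Borel exceptional value, so $\rho(f)>0$ is implicit), both $\alpha$ and $\infty$ are Borel exceptional values of $f$, and $f(0)\neq\alpha,\infty$. The plan is to use the structure from Section 2.

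\textbf{Step 1 (factorisation).} Apply Proposition \ref{Pro1} to $f-\alpha$. This gives
\[
f(z)=\alpha+H(z)e^{P(z)},
\]
where $H$ is a canonical entire function with $\rho(H)<\rho(f)$ (the zeros of $f-\alpha$ have order less than $\rho(f)$), and $P$ is a polynomial. Proposition \ref{Pro3} gives $\rho(f)=\mu(f)=\deg P=:m\in\mathbb{N}$. The argument of Proposition \ref{Pro4}, via Proposition 1.A(g), gives $T(r,H)=o(T(r,f))$ and $T(r,e^P)\sim T(r,f)$. The same holds for $H(z+c)$, since shifts preserve order.

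\textbf{Step 2 (the sharing condition).} Since $f(z+c)-a$ and $f(z)-a$ are entire and share $0$ CM, we have
\[
\frac{f(z+c)-a(z)}{f(z)-a(z)}=e^{Q(z)}
\]
for an entire $Q$. The quotient has order at most $\rho(f)$, so by Lemma 2.B, $Q$ is a polynomial of degree at most $m$. Substituting the factorisation yields
\begin{align*}
H(z+c)e^{P(z+c)}-e^{Q}H(z)e^{P(z)}=(a(z)-\alpha)\bigl(1-e^{Q(z)}\bigr).
\end{align*}
Write $P(z+c)=P(z)+P_1(z)$ with $\deg P_1\le m-1$, so $e^{P_1}$ has order less than $m$ and is small relative to $e^P$. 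Rearranging,
\begin{align*}
e^{P}\bigl(H(z+c)e^{P_1}-H(z)e^{Q}\bigr)=(a-\alpha)(1-e^{Q}).
\end{align*}

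\textbf{Step 3 (case analysis on $Q$).} There are three cases.

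\emph{Case $\deg Q<m$.} Then $e^{Q}$ and $a-\alpha$ are small with respect to $e^P$. If $H(z+c)e^{P_1}-He^{Q}\not\equiv0$, the left-hand side has characteristic comparable to $T(r,e^P)$ while the right-hand side is $o(T(r,f))$, a contradiction. Hence both sides vanish. Since $a\not\equiv\alpha$, we get $e^{Q}\equiv1$, so $f(z+c)\equiv f(z)$.

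\emph{Case $\deg Q=m$ with $Q-P$ of degree less than $m$.} Then $e^{Q}=e^{P}e^{R}$ with $\deg R<m$. Substituting gives
\[
e^{P}\bigl(H(z+c)e^{P_1}-He^{P+R}+(a-\alpha)e^{R}\bigr)=a-\alpha,
\]
and the bracket contains the term $He^{P+R}$ of full growth. Treat this via a Borel-type argument: $\alpha-a$ would be expressed through $e^{P}$ and $e^{2P}$ with small coefficients. Comparing growth, or applying the second main theorem (Lemma 2.A) to $e^{P}$ with the small function, forces $H\equiv0$, which is impossible.

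\emph{Remaining case, $\deg Q=m$ with the leading part of $Q$ not equal to that of $P$.} Here the functions $1$, $e^{P}$, $e^{Q}$, $e^{P+Q}$ have pairwise distinct leading homogeneous parts, or they coincide only for $Q$ and $-P$. In either sub-case the identity is a linear relation among exponentials of degree-$m$ polynomials with small coefficients. A several-variable Borel lemma makes all coefficients vanish, forcing $a\equiv\alpha$, a contradiction. If such a lemma is unavailable in the paper, restrict to complex lines $z=\xi t$ for a generic $\xi$ and invoke the one-variable Borel theorem. The inequality \eqref{pro2:1.1} shows that the small-function property persists on generic lines.

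\textbf{Main obstacle.} The hard part is the rigorous handling of "small" coefficients in several variables: Borel's lemma for $\sum g_j e^{P_j}\equiv0$ with $T(r,g_j)=o(T(r,e^{P_j-P_k}))$ in $\mathbb{C}^n$. I would first try to derive it from the second main theorem (Lemma 2.A) applied to $e^{P_j-P_k}$ (a Nevanlinna-type argument). Failing that, I would reduce to one variable along lines as described above.
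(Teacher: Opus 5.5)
Your proposal is correct and follows essentially the paper's proof. It uses the same factorisation $f=\alpha+He^{P}$ from Propositions \ref{Pro1} and \ref{Pro3}, the same identity coming from $e^{Q}$, and the same split by $\deg Q$ and by the leading homogeneous parts. Your case $\deg Q<m$ merges the paper's Case 1 with its constant case, and your two remaining cases are its Sub-cases 2.1--2.3.

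The several-variable Borel lemma you were unsure about is exactly Lemma \ref{L2.5}, which the paper uses in each sub-case, so the restriction to lines is unnecessary. In the equal-leading-part case, invoke that lemma (or Lemma \ref{L2.4}) rather than Lemma 2.A, since Lemma 2.A only handles constant targets, not small functions.
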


Following example shows that the condition ``$a(\not\equiv \alpha)$'' in Theorem \ref{t2.3} is necessary.

\begin{exm} Let $f(z)=e^{z_1+z_2+z_3}(z_2+z_3+1)(z_1-z_2+1)+1$ and $c=\left(\frac{\pi \iota}{2},\frac{\pi \iota}{2},-\frac{\pi\iota}{2}\right)$. Note that $f(z+c)=\iota e^{z_1+z_2+z_3}(z_2+z_3+1)(z_1-z_2+1)+1$. Clearly $1$ is a Borel exceptional value of $f$ and $f(0)\neq 1$. It is easy to very that $f(z+c)$ and $f(z)$ share $1$ CM, but $f(z+c)\not\equiv f(z)$.
\end{exm}

The following example demonstrates that Theorem \ref{t2.3} fails to hold when $f$ is a transcendental meromorphic function.
\begin{exm} Let $f:\mathbb{C}^n\rightarrow \mathbb{P}^1$ be defined by 
$f(z)= \frac{e^{(z_1+z_2+\cdots+z_n)}-1}{e^{(z_1+z_2+\cdots+z_n)}+1}$ and let $c=\left(c_1,c_2,\cdots,c_n\right)$ such that $e^{(c_1+c_2+\cdots+c_n)}=-1$. Clearly $1$ is a Borel exceptional value of $f$ and $f(0)\neq 1$. It is easy to very that $f(z+c)$ and $f(z)$ share $-1$ CM, but $f(z+c)\not\equiv f(z)$.
\end{exm}

Following example shows that the condition ``$\rho(f)<+\infty$'' in Theorem \ref{t2.3} is necessary.
\begin{exm} Let $f:\mathbb{C}^n\rightarrow \mathbb{P}^1$ be defined by $\displaystyle f(z)=a\left(1-e^{-\sin(z_1+z_2+\cdots+z_n)}\right)$
and $c=(c_1,c_2,\ldots,c_n)=(r_1\pi,r_2\pi,\ldots,r_n\pi)$, where $\sum_{i=1}^nr_i$ is an odd positive integer, a is a non-zero constant. Note that
$\displaystyle f(z+c)=a\left(1-e^{\sin(z_1+z_2+\cdots+z_n)}\right).$
 Clearly $a$ is a Borel exceptional value of $f$ and $f(0)\neq a $. It is easy to very that $f(z+c)$ and $f(z)$ share $2a$ CM, but $f(z+c)\not\equiv f(z)$.
\end{exm}

Next we consider complex partial differential-difference equation
\begin{align}\label{Eq3.1}
\frac{\partial f(z)}{\partial z_j}=f(z+c),
\end{align}
where $k\in\mathbb{Z}[1,n]$ and $c\in\mathbb{C}^n\backslash \{0\}$.
The solutions of (\ref{Eq3.1}) exist, for example, $f(z)=e^{z_1+z_2+\ldots+z_n}$ is a solution of (\ref{Eq3.1}), where
$c =(2k\pi \iota,2k\pi \iota,\ldots,2k\pi \iota)$, $k\in\mathbb{Z}$ and $f(z)=\sin (z_1+z_2+\ldots+z_n)$ or $f(z)=\cos (z_1+z_2+\ldots+z_n)$ are also solutions of (\ref{Eq3.1}) for suitable $c\in\mathbb{C}^n$. Obviously, the equation
(\ref{Eq3.1}) has no rational solutions in $\mathbb{C}^n$.

\begin{theo}\label{t2.4} The order of meromorphic solutions in $\mathbb{C}^n$ of (\ref{Eq3.1}) must satisfy $\rho(f)\geq 1$.
\end{theo}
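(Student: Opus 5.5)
The plan is to argue by contradiction: assume a meromorphic solution $f$ of (\ref{Eq3.1}) has $\rho(f)<1$, and show first that $f(z+c)/f(z)$ is rational, then that this is impossible.

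\textbf{Reduction to transcendental $f$.} If $f\equiv 0$ there is nothing to prove, since the statement concerns non-trivial solutions. A non-zero constant $f$ gives $0=f(z+c)=f$, which is absurd. The remark before the theorem excludes rational solutions. So we may assume $f$ is transcendental with $\rho(f)<1$.

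\textbf{Step 1: the quotient $g$ is rational.} Put
\[
g(z)=\frac{f(z+c)}{f(z)}=\frac{\partial_{z_j}f(z)}{f(z)},
\]
where the second equality uses (\ref{Eq3.1}). I would invoke the several-variable difference analogue of the logarithmic derivative lemma (Korhonen \cite{Korhonen-2012}, Cao--Korhonen \cite{ck}) in its sharp finite-order form, which is the analogue of the Chiang--Feng estimate. It gives, for every $\varepsilon>0$,
\[
m\!\left(r,\tfrac{f(z+c)}{f(z)}\right)+m\!\left(r,\tfrac{f(z)}{f(z+c)}\right)=O\!\left(r^{\rho(f)-1+\varepsilon}\right)=o(1).
\]
For the counting functions, the poles of $g$ come from the poles of $f(z+c)$ and the zeros of $f$. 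I would bound them by the shifted-counting estimate
\[
N(r+\|c\|,a;f)-N(r,a;f)=O\!\left(r^{\rho(f)-1+\varepsilon}\right)+O(\log r),\qquad a\in\{0,\infty\}.
\]
This estimate holds because $n(t,a;f)=O(t^{\rho+\varepsilon})$ and the divisor of $f(z+c)$ is a translate of that of $f$. Together these yield $T(r,g)=O(\log r)$, so $g=R$ is a rational function on $\mathbb{C}^n$, and
\[
f(z+c)=R(z)f(z),\qquad \partial_{z_j}f=R\,f .
\]

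\textbf{Step 2: restriction to complex lines.} This is the step I expect to be the main obstacle, since one must combine a shift in direction $c$ with differentiation in direction $e_j$. I would restrict to complex lines $t\mapsto z_0+te_j$ through generic points $z_0$. By the restriction inequality (\ref{pro2:1.1}), used as in the proof of Proposition \ref{Pro2} (after translating so lines pass through the origin), the one-variable function $h(t)=f(z_0+te_j)$ has order $<1$ for generic $z_0$. It satisfies $h'/h=R(z_0+te_j)$, a rational function of $t$. In one variable, $h'/h$ rational and $\rho(h)<1$ force $h=(\text{rational})\cdot e^{Q}$ with $\deg Q<1$. Hence $h$ is rational in $t$ with degree bounded uniformly in generic $z_0$, since that degree is controlled by the residues and degree of $R$.

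\textbf{Step 3: contradiction.} I would feed this into $f(z+c)=R f$ and argue as follows.
\begin{itemize}
\item \emph{Poles of $f$.} Comparing divisors in $\partial_{z_j}f=Rf$: at a generic point of a pole component of $f$ on which $z_j$ is not constant, $\partial_{z_j}f$ has higher pole order than $f$. Hence $R$ has a pole along that component. Since $R$ has only finitely many (algebraic) pole components, the shift relation $\mu^\infty_{f(z+c)}=\mu^\infty_f-\mu_R$ (locally) propagates along $z+\mathbb{Z}c$. This forces the divisor of $f$ to be algebraic, i.e. $N(r,f)=O(\log r)$. The analogous argument applies to the zeros of $f$.
\item \emph{Conclusion.} Then $f=\frac{H_1}{H_2}e^{P}$ by Proposition \ref{Pro1}, with $H_1,H_2$ polynomials, and $P$ a polynomial of degree $\le\rho(f)<1$, hence constant. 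So $f$ is rational, contradicting transcendence.
\end{itemize}
If the divisor-propagation argument proves delicate, I would instead choose lines in the direction $c$ and use the one-variable theory of $\varphi(t+1)=R(z_0+tc)\varphi(t)$ with $\rho(\varphi)<1$ to reach the same rationality conclusion.
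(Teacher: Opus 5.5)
\textbf{There is a genuine gap in Step 1, and Steps 2 and 3 depend on it.}

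The difference lemma does give $m(r,g)+m(r,1/g)=o(1)$. However, your shifted-counting estimate only controls the increment $N(r+\|c\|,a;f)-N(r,a;f)$. It says nothing about whether the divisor of $f(z+c)$ cancels that of $f(z)$. The pole divisor of $g=f(z+c)/f(z)$ can contain essentially every zero of $f$ and every pole of $f(z+c)$. So all you get is a bound like $N(r,g)\le N(r,f(z+c))+N(r,0;f)$, which can grow like a positive power of $r$, not $O(\log r)$.

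In bounding $N(r,g)$ you use nothing from (\ref{Eq3.1}) except $\rho(f)<1$. Your argument would therefore show that $f(z+c)/f(z)$ is rational for every meromorphic $f$ of order less than $1$, which is false. Take $f(z)=\prod_{k\ge1}(1-z_1/k^3)$ on $\mathbb{C}^n$, which has order $1/3$, and $c=(1,0,\ldots,0)$. Then $f(z+c)/f(z)$ has a pole along every hyperplane $z_1=k^3$, and none is cancelled because $k^3+1$ is never a cube. So its pole divisor is not algebraic.

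Step 3 also misstates when $\partial_{z_j}$ raises the pole order. What matters is whether the component is invariant under translation in the $e_j$-direction. For example, with $j=2$ the component $\{z_1=0\}$ gives no increase, although $z_2$ is non-constant on it.

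\textbf{The missing ingredient is a pointwise smallness statement for $\partial_{z_j}f/f$.} Integrated estimates such as the logarithmic derivative lemma only give $m(r,\partial_{z_j}f/f)=O(\log r)$. They do not show the quotient is small, and smallness is exactly what the contradiction needs.

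The paper argues directly. Suppose $f$ is transcendental with $\rho(f)<1$. Outside an $\varepsilon$-set $E$, as $\|z\|\to\infty$:
\begin{itemize}
\item Lemma \ref{L2.6} gives $f(z+c)/f(z)\to 1$;
\item Lemma \ref{L2.7}, with $\delta<1-\rho(f)$, gives $\partial_{z_j}f/f\to 0$.
\end{itemize}
Equation (\ref{Eq3.1}) says these two quotients coincide, which is impossible.

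Your reduction to the transcendental case is fine. Replacing Steps 1--3 with this comparison completes the proof.
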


\begin{theo}\label{t2.5} Let $f$ be an entire solution in $\mathbb{C}^n$ of the equation (\ref{Eq3.1}) such that $f(0)\neq 0$ and $\rho(f)$ be finite. If $0$ is a Borel exceptional value of $f$, then $\rho(f)=1$. Furthermore, one of the following cases holds:
\begin{enumerate}
\item[(i)] $f(z)=(b_1z_1+\ldots+b_nz_n+b_0)e^{a_1z_1+\ldots+a_nz_n+a_0}$, where $a_0,a_1,\ldots,a_n,b_0,b_1,\ldots,b_n\in\mathbb{C}$ such that $(a_1,\ldots,a_n)\neq (0,\ldots,0)$, $b_0\neq 0$ and $a_j=e^{a_1c_1+a_2c_2+\ldots+a_nc_n}$;
\item[(ii)] $f(z)=g(z)e^{a_1z_1+\ldots+a_nz_n+a_0}$, where $a_0,a_1,\ldots,a_n\in\mathbb{C}$ such that $(a_1,\ldots,a_n)\neq (0,\ldots,0)$, $a_j=e^{a_1c_1+a_2c_2+\ldots+a_nc_n}$ and $g(z)$ is a transcendental entire function in $\mathbb{C}^n$ such that $\rho(g)<1$ and $\frac{\partial g(z)}{\partial z_j}=e^{a_1c_1+a_2c_2+\ldots+a_nc_n}(g(z+c)-g(z))$.
\end{enumerate}
\end{theo}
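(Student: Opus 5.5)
Since $f$ is entire, $\infty$ is a Picard value of $f$ and hence a Borel exceptional value. Together with the hypothesis that $0$ is Borel exceptional, $f$ has two distinct Borel exceptional values, $0$ and $\infty$. We have $f(0)\neq 0,\infty$, and $\rho(f)$ is finite and positive: by Theorem \ref{t2.4}, $\rho(f)\ge 1$.

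By Proposition \ref{Pro1}, with $H_2\equiv 1$, we can write
\begin{align*}
f=H e^{P},
\end{align*}
where $H$ is the canonical function of $\mu^0_f$, $P$ is a polynomial, and
\begin{align*}
\rho(H)=\operatorname{Ord}\mu^0_f<\rho(f).
\end{align*}
By Propositions \ref{Pro3} and \ref{Pro2}, $\rho(f)=\deg P=:p\in\mathbb{N}$ and $\rho(H)<p$.

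Substituting $f=He^P$ into (\ref{Eq3.1}) gives
\begin{align*}
\partial_{z_j}H+H\,\partial_{z_j}P=H(z+c)\,e^{P(z+c)-P(z)}.
\end{align*}
Here $P(z+c)-P(z)$ is a polynomial of degree at most $p-1$.

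We now show $p=1$. Suppose $p\ge 2$. I would compare orders through the identity
\begin{align*}
e^{P(z+c)-P(z)}=\frac{\partial_{z_j}H+H\,\partial_{z_j}P}{H(z+c)}.
\end{align*}
The right-hand side has order at most $\max\{\rho(H),\,0\}<p$, using that derivatives and shifts do not raise the order of a finite-order function (shift invariance of order in $\mathbb{C}^n$ follows from the characteristic comparison $T(r,f(z+c))\le T(r+\|c\|,f)+O(1)$). Hence $\deg(P(z+c)-P(z))\le\rho(H)$. This alone is not contradictory, so the main obstacle is excluding $p\ge 2$ genuinely.

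For this I would argue on growth. Write $Q=P(z+c)-P(z)$, of degree at most $p-1$. Then $H$ satisfies a first-order linear differential-difference equation whose coefficients $\partial_{z_j}P$ (degree $p-1$) and $e^{Q}$ must balance. If $\deg Q=p-1\ge 1$, then $\rho(e^Q)=p-1\le\rho(H)<p$. I would then restrict to complex lines $z=z^0+t\xi$ and use the one-variable theory: along such lines, $H'/H+\partial_{z_j}P=\frac{H(t+c)}{H(t)}e^{Q}$, and the logarithmic difference and derivative lemmas give $m(r,\cdot)=o(r^{p-1})$-type estimates for the quotients of $H$. These force $T(r,e^Q)$ to be small relative to $r^{p-1}$, contradicting $\deg Q=p-1$ unless the quotient structure cancels. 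If instead $\deg Q<p-1$, the leading part of $\partial_{z_j}P$ must be cancelled by $H'/H$, whose proximity function is only $O(\log r)$; that forces the degree-$(p-1)$ polynomial to have small characteristic, again a contradiction. I expect this case analysis to be the delicate step, and I would try the Clunie-type argument first. In either case $p=1$, so $\rho(f)=1$.

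With $p=1$, write $P=a_1z_1+\cdots+a_nz_n+a_0$, where $(a_1,\ldots,a_n)\neq 0$. Then $Q$ is the constant
\begin{align*}
\lambda:=a_1c_1+\cdots+a_nc_n,
\end{align*}
and $\rho(H)<1$. Setting $g=H$, the equation becomes
\begin{align*}
\partial_{z_j}g=e^{\lambda}g(z+c)-a_jg(z).
\end{align*}
I would now show that $a_j=e^{\lambda}$. Dividing by $g$ gives $a_j=e^{\lambda}\frac{g(z+c)}{g(z)}-\frac{\partial_{z_j}g}{g}$. Since $\rho(g)<1$, the quotient $g(z+c)/g(z)$ tends to $1$ and $\partial_{z_j}g/g$ tends to $0$ outside small exceptional sets. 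This follows from the several-variable analogue of the estimates of Chiang–Feng, obtained by restricting to lines. Hence $a_j=e^{\lambda}$, and the equation reduces to
\begin{align*}
\partial_{z_j}g=e^{\lambda}\bigl(g(z+c)-g(z)\bigr).
\end{align*}

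Finally, split into cases according to $g$. If $g$ is transcendental, this is case (ii). If $g$ is a polynomial, comparing top-degree homogeneous parts in the reduced equation forces $\deg g\le 1$: the degree-$(d-1)$ part on the right is $e^{\lambda}$ times the directional derivative of $g_d$ along $c$, which must equal $\partial_{z_j}g_d$. Iterating this degree comparison on lower terms should give $\deg g\le 1$, and $g(0)=f(0)e^{-a_0}\neq 0$ gives $b_0\neq 0$. This is case (i).
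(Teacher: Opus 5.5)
\textbf{The exclusion of $p\ge 2$ fails, and it cannot be repaired: for $n\ge 2$ the statement is false.}

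Your subcase $\deg Q=p-1\ge 1$ is fine. It is what the paper does via Lemma \ref{L2.8}: the identity $e^{-Q}=\frac{H(z)}{H(z+c)}\bigl(\frac{\partial_{z_j}H}{H}+\partial_{z_j}P\bigr)$ gives $T(r,e^{Q})=O(r^{\rho(H)-1+\varepsilon})+O(\log r)$, hence $\deg Q\le\rho(H)-1<p-1$.

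The problem is the subcase $\deg Q<p-1$. There you tacitly assume that $\partial_{z_j}P$ has a nonzero homogeneous part of degree $p-1$ which $\partial_{z_j}H/H$ would have to cancel. In $\mathbb{C}^n$ the degree-$(p-1)$ parts of $\partial_{z_j}P$ and of $P(z+c)-P(z)$ are $\partial_{z_j}P_p$ and $(c\cdot\nabla)P_p$. Both vanish identically when the top homogeneous part $P_p$ is independent of the directions $e_j$ and $c$.

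Concretely, let $n=2$, $j=1$, $c=(2\pi i,0)$ and $f(z)=e^{z_1+z_2^2}$.
\begin{itemize}
\item $\partial_{z_1}f=f=f(z+c)$, so $f$ solves (\ref{Eq3.1}).
\item $f(0)=1$, and $0$ is a Picard value, hence a Borel exceptional value.
\item But $\rho(f)=2$.
\end{itemize}
In your notation $H\equiv 1$, $P=z_1+z_2^2$, $Q\equiv 2\pi i$ and $\partial_{z_1}P\equiv 1$. Your equation reduces to $1=e^{2\pi i}$, so no growth argument can rule this case out.

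\textbf{The polynomial step has the same defect.} Comparing top-degree parts only gives $\partial_{z_j}g_d=e^{\lambda}(c\cdot\nabla)g_d$. This says $g_d$ is constant along the direction $e_j-e^{\lambda}c$, which places no bound on $d$ when $n\ge 2$.

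With the same $j$ and $c$, take $f(z)=(z_2^2+1)e^{z_1}$. It solves (\ref{Eq3.1}), has $f(0)=1$ and $\rho(f)=1$. Its zero divisor is algebraic, so $0$ is Borel exceptional. Yet $g=z_2^2+1$ is neither linear nor transcendental, so neither (i) nor (ii) holds.

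The paper's own proof (whose $Q$ is your $P$) has exactly these holes:
\begin{itemize}
\item it asserts $\deg(Q(z)-Q(z+c))=\deg Q-1$;
\item it concludes $\deg Q=1$ from $Q(z)-Q(z+c)$ being constant;
\item it says a polynomial $g$ ``easily'' has degree at most one.
\end{itemize}
All three are one-variable facts. Your outline, like the paper's, is a valid proof when $n=1$. For $n\ge 2$ the theorem cannot be proved as stated without additional hypotheses.
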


\begin{rem} Obviously Theorem \ref{t2.5} extends Theorem 3.5 \cite{Liu-Dong-2014} to the case of higher dimensions.
\end{rem}

\subsection{{\bf Auxiliary lemmas}}
\begin{lem}\label{L2.1}\cite[Theorem 2.4]{Cao-Xu-2020} Let $f$ be a non-constant meromorphic function on $\mathbb{C}^n$ and let $c \in \mathbb{C}^n\setminus \{0\} $. If $f$ is of finite order, then 
\begin{align*}
 m\left(r,\frac{f(z+c)}{f(z)}\right)+m\left(r, \frac{f(z)}{f(z+c)}\right)=O\left(r^{\rho(f)-1+\varepsilon}\right)
 \end{align*}
holds for any $\varepsilon>0$.
\end{lem}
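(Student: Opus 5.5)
The plan is to prove the estimate by a Poisson–Jensen argument in the ball $\mathbb{C}^n(R)\cong\mathbb{R}^{2n}(R)$. This is the several-variable analogue of the Chiang–Feng proof of the one-variable difference lemma, but with the Newtonian (Green) kernel of $\mathbb{R}^{2n}$ in place of the logarithmic kernel.

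\emph{Step 1: Poisson–Jensen formula.} Since $\log|f|$ is $\delta$-subharmonic, its Riesz measure is $\mu=\mu_f^0-\mu_f^\infty$, viewed as currents of integration, i.e.\ (up to normalisation) the measure $\nu\,\upsilon_W^{n-1}$ on $\operatorname{supp}\nu$. For $\|z\|<R$ this gives
\begin{align*}
\log|f(z)|=\int_{W\langle R\rangle}P_R(z,w)\log|f(w)|\,d\sigma(w)-\int_{W(R)}G_R(z,w)\,d\mu(w),
\end{align*}
where $P_R$ is the Poisson kernel and $G_R$ the Green function of the ball. Writing the same identity at $z+c$ and subtracting, I obtain $\log|f(z+c)/f(z)|$ as the sum of two differences:
\begin{itemize}
\item a Poisson term with kernel $P_R(z+c,w)-P_R(z,w)$;
\item a Green term with kernel $G_R(z+c,w)-G_R(z,w)$.
\end{itemize}
Since $\log^+|x|+\log^+|1/x|=|\log|x||$, it suffices to bound
\begin{align*}
\int_{W\langle r\rangle}\bigl|\log|f(z+c)/f(z)|\bigr|\,\sigma_W(z).
\end{align*}
Throughout I choose $R=2r$ and take $r$ large, so that $\|z\|,\|z+c\|\le r+\|c\|<R$.

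\emph{Step 2: the Poisson term.} The mean value theorem gives
\begin{align*}
|P_R(z+c,w)-P_R(z,w)|\le C\,\|c\|\,R^{2n-1}(R-r-\|c\|)^{-2n}=O(\|c\|/r).
\end{align*}
Integrating $|\log|f||$ over $W\langle R\rangle$ produces $m(R,f)+m(R,1/f)\le 2T(R,f)+O(1)$. So this term is $O(T(2r,f)/r)=O(r^{\rho(f)-1+\varepsilon})$ by finiteness of order.

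\emph{Step 3: the Green term (main obstacle).} Write $G_R(z,w)=\|z-w\|^{2-2n}-h_R(z,w)$ with $h_R$ harmonic and smooth for $\|z\|\le r+\|c\|$; for $n=1$ the kernel is logarithmic instead. The smooth part $h_R$ is handled exactly as in Step 2, with total mass $|\mu|(W(R))\lesssim R^{2n-2}\,n(R)$. The singular part requires the uniform estimate, for every $w$,
\begin{align*}
\int_{W\langle r\rangle}\Bigl|\|z+c-w\|^{2-2n}-\|z-w\|^{2-2n}\Bigr|\,\sigma_W(z)\le C\,\|c\|\,r^{1-2n}\log r.
\end{align*}
I would prove it by splitting the sphere into two regions:
\begin{itemize}
\item where $\|z-w\|\ge2\|c\|$, apply the mean value theorem, which leaves an integral of $\|z-w\|^{1-2n}$ over the sphere; this is essentially $r^{1-2n}\int_{2\|c\|}^{r}t^{-1}\,dt$, giving the logarithmic factor;
\item on the small cap where $\|z-w\|<2\|c\|$, use integrability of the kernel on the $(2n-1)$-dimensional sphere, which contributes $O(\|c\|\,r^{1-2n})$.
\end{itemize}
Integrating this bound against $|\mu|$ over $W(R)$ gives $O\bigl(n(2r)\log r/r\bigr)$. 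Finally, $n(2r)\le N(4r)/\log 2=O(r^{\rho(f)+\varepsilon})$, since $N(r,0;f),N(r,\infty;f)\le T(r,f)+O(1)$. Absorbing $\log r$ into $r^\varepsilon$ completes the proof.

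The delicate point I expect is the kernel estimate in Step 3, together with matching the normalisation of $\sigma_W$ and of $\mu$ to the counting functions $n_\nu$ of Definition 1.F. If a direct argument fails, the fallback is to slice by complex lines parallel to $c$ and use Fubini, reducing to the one-variable Chiang–Feng estimate.
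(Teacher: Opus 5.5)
The paper gives no proof of this lemma. It is quoted from \cite{Cao-Xu-2020} and used as a black box (for instance in Lemma~\ref{L2.8}), so there is no internal argument to compare yours with. Your proposal therefore has to stand on its own, and it does: it is a correct, self-contained proof.

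The individual steps check out as follows.
\begin{enumerate}
\item The Riesz (Poisson--Jensen) decomposition of the $\delta$-subharmonic function $\log|f|$ on $W(2r)\cong\mathbb{R}^{2n}(2r)$ holds off the divisor supports. These meet $W\langle r\rangle$ in a $\sigma_W$-null set, which is all you need.
\item Your Poisson-kernel bound $C\|c\|R^{2n-1}(R-r-\|c\|)^{-2n}$ is right. With $R=2r$ it gives $O(\|c\|/r)$.
\item The regular part $h_R(z,w)=\bigl\|\,\|w\|z/R-Rw/\|w\|\,\bigr\|^{2-2n}$ has $z$-gradient $O(r^{1-2n})$, uniformly in $w\in W(R)$. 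This is because the inner norm is at least $R-\|z\|$.
\item The spherical Newtonian-kernel estimate follows from the cap bound $\sigma_W(\{z\in W\langle r\rangle:\|z-w\|<t\})\le C(t/r)^{2n-1}$, which is uniform in $w$. Integrating by parts against this distribution function gives:
\begin{itemize}
\item $C\|c\|r^{1-2n}(\log(r/\|c\|)+1)$ off the cap;
\item $O(\|c\|r^{1-2n})$ on the cap, applying the bound to both $w$ and $w-c$.
\end{itemize}
\item The normalisation issue you flag is harmless. By Wirtinger, $\nu\,\upsilon_W^{n-1}$ is a dimensional constant times $(2n-2)$-dimensional area counted with multiplicity. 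Hence $|\mu|(W[R])$ is a dimensional constant times $R^{2n-2}\bigl(n(R,0;f)+n(R,\infty;f)\bigr)$ in Stoll's normalisation.
\item Tonelli justifies integrating the kernel bound against $|\mu|$.
\end{enumerate}
The slicing fallback is unnecessary. It would also be awkward, since $\sigma_W$ does not disintegrate along complex lines parallel to $c$.

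Compared with the paper's bare citation, your route buys a self-contained argument. It also gives a slightly stronger, order-free inequality: for every non-constant meromorphic $f$ on $\mathbb{C}^n$ and all sufficiently large $r$,
\[
m\Bigl(r,\frac{f(z+c)}{f(z)}\Bigr)+m\Bigl(r,\frac{f(z)}{f(z+c)}\Bigr)\le C(n,\|c\|)\,\frac{T(4r,f)\log r}{r}.
\]
This uses $m(2r,f)+m(2r,1/f)\le 2T(2r,f)+O(1)$ and $n(2r,a;f)\le (T(4r,f)+O(1))/\log 2$ for $a=0,\infty$. The finite-order hypothesis enters only at the very end, to turn this into $O(r^{\rho(f)-1+\varepsilon})$.
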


\begin{lem}\label{L2.2}\cite[Corollary 3.2]{Cao-Xu-2020} Let $A_0$, $\ldots$, $A_m$ be entire functions in $\mathbb{C}^n$ such that there exists an integer $k \in \{0, \ldots, m\}$ satisfying 
\begin{align*}
\rho(A_k)>\max \{\rho(A_j): 0 \le j \le m, j \ne k\}.
\end{align*}
If $f$ is a nontrivial entire solution of linear partial difference equation 
\begin{align*}
A_m(z)f(z+c_m)+\ldots+A_1(z)f(z+c_1)+A_0f(z)=0,
\end{align*}
 where $c_1,\ldots,c_m$ are distinct values of $\mathbb{C}^n\setminus\{0\}$, then we have $\rho(f)\ge \rho(A_k)+1$.
\end{lem}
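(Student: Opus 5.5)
We argue by contradiction. Assume that $f\not\equiv 0$ is an entire solution with $\rho(f)<\rho(A_k)+1$. Then $\rho(f)<+\infty$, since otherwise there is nothing to prove. Set $c_0=0$, so that the equation reads $\sum_{j=0}^{m}A_j(z)f(z+c_j)=0$ with $c_0,c_1,\ldots,c_m$ pairwise distinct. Write $\sigma=\max\{\rho(A_j):j\neq k\}$, so that $\sigma<\rho(A_k)$.

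\textbf{Step 1: $f$ is non-constant.} If $f\equiv C\neq0$, the equation gives $A_k=-\sum_{j\neq k}A_j$. Proposition 1.A(c) then yields $\rho(A_k)\le\sigma$, which is a contradiction. If some $A_j$ is constant, its order is $0$ and the same inequality for sums still applies. Hence $f$ is non-constant. Put $F(z)=f(z+c_k)$. Then $F\not\equiv0$ and $F$ is non-constant. We need $\rho(F)=\rho(f)$; this follows from the standard comparison $T(r,f(z+c))\le T(r+\|c\|,f)+O(1)$, applied in both directions. If this shift-invariance is not taken as known, it is the one auxiliary estimate that must be supplied.

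\textbf{Step 2: isolate the dominant coefficient.} Dividing the equation by $F$ gives
\begin{align*}
A_k(z)=-\sum_{j\neq k}A_j(z)\,\frac{F(z+(c_j-c_k))}{F(z)} .
\end{align*}
Each $c_j-c_k$ with $j\neq k$ is nonzero because the shifts are distinct. Since $A_k$ is entire, $T(r,A_k)=m(r,A_k)$. Subadditivity of $m$ for sums and products gives
\begin{align*}
T(r,A_k)\le\sum_{j\neq k}m(r,A_j)+\sum_{j\neq k}m\Big(r,\frac{F(z+c_j-c_k)}{F(z)}\Big)+O(1).
\end{align*}

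\textbf{Step 3: estimate the two sums.} For any $\varepsilon>0$, each term of the first sum is $O(r^{\sigma+\varepsilon})$. By Lemma~\ref{L2.1} applied to $F$, which has finite order $\rho(f)$, each term of the second sum is $O(r^{\rho(f)-1+\varepsilon})$, and this holds for all $r$ with no exceptional set. Therefore
\begin{align*}
T(r,A_k)=O\big(r^{\max\{\sigma,\rho(f)-1\}+\varepsilon}\big),
\end{align*}
so $\rho(A_k)\le\max\{\sigma,\rho(f)-1\}$.

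\textbf{Step 4: conclude.} Since $\rho(A_k)>\sigma$, the maximum must be $\rho(f)-1$, which forces $\rho(A_k)\le\rho(f)-1$. This contradicts the assumption $\rho(f)<\rho(A_k)+1$. Hence $\rho(f)\ge\rho(A_k)+1$.

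The main obstacle is the bookkeeping around Lemma~\ref{L2.1}. It has to be applied to the shifted function $F$ rather than to $f$, which requires the invariance of order under translation in $\mathbb{C}^n$. One also has to confirm that the lemma's bound holds for all $r$, so that no exceptional set interferes with computing the order of $A_k$.
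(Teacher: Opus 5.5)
Your proof is correct. The paper gives no proof of its own here, since the lemma is quoted from \cite[Corollary 3.2]{Cao-Xu-2020}. Your argument (divide by the term carrying $A_k$, use $T(r,A_k)=m(r,A_k)$, and bound the difference quotients by Lemma~\ref{L2.1}) is the standard Chiang--Feng-type proof of such statements, and the same mechanism the paper uses in its proof of Lemma~\ref{L2.8}.

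The step you flag as the main obstacle can be bypassed entirely. Write $\frac{f(z+c_j)}{f(z+c_k)}=\frac{f(z+c_j)}{f(z)}\cdot\frac{f(z)}{f(z+c_k)}$ and apply both terms of Lemma~\ref{L2.1} to $f$ itself. This gives $m\bigl(r,f(z+c_j)/f(z+c_k)\bigr)=O(r^{\rho(f)-1+\varepsilon})$ directly, so $\rho(F)=\rho(f)$ is never needed.

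If you keep $F$, justify order invariance through $\log M(r,f)$ for entire $f$. Do not rely on $T(r,f(z+c))\le T(r+\|c\|,f)+O(1)$: that is not the standard form of the shift comparison, which carries a $(1+o(1))$ factor even in one variable.
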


\begin{lem}\label{L2.3}\cite[Theorem 2.2]{Cao-Xu-2020} Let $f$ be a non-constant meromorphic function on $\mathbb{C}^n$ with 
\begin{align*}
\lim\limits_{r\rightarrow \infty}\sup \frac{\log T(r,f)}{r}=0,
\end{align*}
then $\parallel\;T(r,f(z+c))=T(r,f)+o(T(r,f))$ holds for any constant $c\in\mathbb{C}^n$.
\end{lem}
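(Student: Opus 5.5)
The statement to be proved is Lemma \ref{L2.3}, which is quoted from \cite[Theorem 2.2]{Cao-Xu-2020}. Its engine is the difference analogue of the logarithmic derivative lemma for functions of hyper-order less than one (Cao--Korhonen \cite{ck}), combined with a Borel-type growth lemma. The hypothesis $\limsup_{r\to\infty}\log T(r,f)/r=0$ is exactly the growth condition needed in the several-variable version of that lemma.

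\textbf{Step 1 (proximity term).} Since $f(z+c)=f(z)\cdot\frac{f(z+c)}{f(z)}$, we have
\begin{align*}
m(r,f(z+c))\le m(r,f)+m\left(r,\frac{f(z+c)}{f(z)}\right).
\end{align*}
Under the stated growth hypothesis, I would show that $m\big(r,f(z+c)/f(z)\big)=o(T(r,f))$ outside an exceptional set. The route is to pass to the spherical average over $W\langle r\rangle$ and use the Poisson--Jensen formula on the ball of radius $R=r+\|c\|$-type enlargements. This bounds the difference quotient by $\frac{\|c\|}{r}$ times quantities like $T(R,f)$. The same one-dimensional mechanism that gives Lemma \ref{L2.1} for finite order, namely $O(r^{\rho-1+\varepsilon})$, gives $o(T(r,f))$ under the weaker hypothesis.

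\textbf{Step 2 (counting term).} The poles of $f(z+c)$ are the translates of the poles of $f$. Because the balls $\|z\|<t$ and $\|z+c\|<t$ differ by a shell of width $\|c\|$, one gets
\begin{align*}
n(t,f(z+c))\le n(t+\|c\|,f)
\end{align*}
for the divisor counting (using $\omega_W^{n-1}$-masses; the Lelong-number normalisation needs care). Integrating,
\begin{align*}
N(r,f(z+c))\le N(r+\|c\|,f)+O(\log r).
\end{align*}
Combining Steps 1 and 2 gives
\begin{align*}
T(r,f(z+c))\le T(r+\|c\|,f)+o(T(r,f)).
\end{align*}

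\textbf{Step 3 (growth lemma).} This is where I expect the main obstacle. I would invoke the real-variable lemma: if $T$ is increasing and $\limsup\log T(r)/r=0$, then
\begin{align*}
T(r+\|c\|)=T(r)+o(T(r))
\end{align*}
outside a set of finite logarithmic measure. This is the standard Halburd--Korhonen--Tohge style lemma, which applies precisely because the hyper-order type quantity with respect to $r$ (not $\log r$) vanishes. Then we have $T(r,f(z+c))\le T(r,f)+o(T(r,f))$ off an exceptional set.

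\textbf{Step 4 (reverse inequality).} Apply Steps 1--3 to $g(z)=f(z+c)$ with shift $-c$. Step 1 applies to $g$ because $T(r,g)\le 2T(r+\|c\|,f)$ trivially, so $g$ satisfies the same growth hypothesis. The $o(T(r,g))$ error is converted into $o(T(r,f))$ by the inequality already proved, which yields $T(r,f)\le T(r,f(z+c))+o(T(r,f))$. Hence $T(r,f(z+c))=T(r,f)+o(T(r,f))$ as $r\to\infty$ outside an exceptional set of finite logarithmic measure, which is the meaning of the symbol $\parallel$.
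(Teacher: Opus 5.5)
The paper quotes this lemma from Cao--Xu without proof, so your argument has to stand on its own. \textbf{Step 3 is false as stated.} The hypothesis is only $\log T(r,f)=o(r)$, which is strictly weaker than hyper-order $<1$. The Halburd--Korhonen--Tohge lemma with an exceptional set of finite logarithmic measure needs hyper-order $<1$, and under $\log T(r)=o(r)$ alone it fails. Here is a counterexample. Put $t_k=k\log k$, and let $T$ equal $e^{k-1}$ on $[t_{k-1}+1,t_k]$ and increase linearly to $e^{k}$ on $[t_k,t_k+1]$. Then $\log T(r)=O(r/\log r)$. Yet for $0<s<1$ and $r\in[t_k-s/2,t_k]$ you get $T(r+s)\ge(1+(e-1)s/2)\,T(r)$, and $\sum_k s/(2t_k)=\infty$.

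What is true is a zero-upper-density version. Let $F=\{r:T(r+s)\ge(1+\varepsilon)T(r)\}$, which is closed since $T$ is continuous, and let $r_{j+1}=\min(F\cap[r_j+s,\infty))$. Then $T(r_j)\ge(1+\varepsilon)^jT(r_0)$ forces $j=o(r_j)$. Also $F\cap[r_0,R]\subset\bigcup_{r_j\le R}[r_j,r_j+s)$, so this set has measure $o(R)$. A diagonal choice over $\varepsilon\to0$ then yields a single set $E$ with $\overline{\mathrm{dens}}\,E=0$. That is why upper density is introduced at the start of Section 3: in this lemma $\parallel$ means ``outside a set of zero upper density'', not ``outside a set of finite logarithmic measure''. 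The same weakening applies to whatever exceptional set Step 1 would produce, so your final sentence is wrong.

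\textbf{Step 1} carries all the analytic content of your route, namely a logarithmic difference lemma in $\mathbb{C}^n$ under $\log T(r,f)=o(r)$. You only assert it, and it is not obtained from the finite-order argument by inspection. \textbf{Step 2} is inaccurate as displayed. With the translation-invariant normalisation you only get $n(t,f(z+c))\le(1+\|c\|/t)^{2n-2}\,n(t+\|c\|,f)$. Passing from $dt/t$ to $du/u$ then costs $\|c\|\int^{r+\|c\|}n(u,f)\,du/(u(u-\|c\|))$. This is $o(N(r+\|c\|,f))+O(1)$, but it can be of size $r$ (for example $n=1$, order two, poles on the ray through $c$). So ``$+O(\log r)$'' is wrong, though harmless for an $o(T)$ conclusion.

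\textbf{Both steps can be bypassed.} By Definition 1.G, $T(r,f)=\int_s^rA_f(t)\,dt/t+O(1)$ with $A_f(t)=t^{-2(n-1)}\int_{W(t)}f^*(\ddot{\omega})\wedge\upsilon_W^{n-1}$, and $\upsilon_W$ is translation invariant. So for $g(z)=f(z+c)$ you get $A_g(t)\le(1+\|c\|/t)^{2n-2}A_f(t+\|c\|)$. Hence $T(r,g)\le(1+o(1))T(r+\|c\|,f)+O(1)$ for every $r$, with no difference lemma and no exceptional set. Symmetrically, $T(r,f)\le(1+o(1))T(r+\|c\|,g)+O(1)$, and $g$ inherits $\log T(r,g)=o(r)$. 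This is also what makes your ``trivially'' in Step 4 legitimate; in your framework that bound holds only off Step 1's exceptional set. Now apply the corrected growth lemma to $T(\cdot,f)$ and to $T(\cdot,g)$ and discard the union of the two zero-upper-density sets. This gives the lemma with the correct exceptional set.
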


\begin{lem}\label{L2.4} \cite[Theorem 1.26]{Hu-Li-Yang-2003} Let $f:\mathbb{C}^n\to\mathbb{P}^1$ be a non-constant meromorphic function. Assume that 
$R(z, w)=\frac{A(z, w)}{B(z, w)}$. Then
\beas \parallel\;T\left(r, R_f\right)=\max \{p, q\} T(r, f)+O\Big(\sideset{}{_{j=0}^{p}}{\sum}T(r, a_j)+\sideset{}{_{j=0}^{q}}{\sum}T(r, b_j)\Big),\eeas
where $R_f(z)=R(z, f(z))$ and two coprime polynomials $A(z, w)$ and $B(z,w)$ are given
respectively $A(z,w)=\sum_{j=0}^p a_j(z)w^j$ and $B(z,w)=\sum_{j=0}^q b_j(z)w^j$.
\end{lem}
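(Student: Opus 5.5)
The statement immediately above is Lemma \ref{L2.4}. It is cited from \cite[Theorem 1.26]{Hu-Li-Yang-2003} and is the several-variable Valiron--Mohon'ko lemma, so I would follow the one-variable argument and check that each step only uses tools valid in $\mathbb{C}^n$: the first main theorem and the elementary inequalities $T(r,g_1g_2)\le T(r,g_1)+T(r,g_2)$, $T(r,g_1+g_2)\le T(r,g_1)+T(r,g_2)+O(1)$. Throughout, write $S(r)=\sum_j T(r,a_j)+\sum_j T(r,b_j)$. I would use the normalized characteristic $T_f(r)=T(r,f)+O(1)$ whenever counting functions are computed through divisors of coprime representations.

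\textbf{Upper bound.} Write $f=f_1/f_0$ with $f_0,f_1$ entire and coprime at every point. Homogenize:
\[
\tilde A=\sum_{j=0}^p a_j f_1^j f_0^{\,d-j},\qquad \tilde B=\sum_{j=0}^q b_j f_1^j f_0^{\,d-j},\qquad d=\max\{p,q\},
\]
so that $R_f=\tilde A/\tilde B$. After clearing the denominators of the $a_j,b_j$ by a common entire multiplier, whose contribution is $O(S(r))$, I would estimate $T(r,R_f)$ through Jensen's formula on spheres $W\langle r\rangle$. The key pointwise inequality is
\[
\log\max\{|\tilde A|,|\tilde B|\}\le d\log\max\{|f_0|,|f_1|\}+\log^+\max_j\{|a_j|,|b_j|\}+O(1),
\]
which, after integration against $\sigma_W$, gives $T(r,R_f)\le d\,T(r,f)+O(S(r))$.

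\textbf{Lower bound.} I would induct on $d$, following the standard one-variable route. For $q<p$, handle the polynomial case $B=1$ first, using
\[
m\left(r,\sum_{j=0}^p a_jf^j\right)\ge p\,m(r,f)-O(S(r)),
\]
which holds on the set where $|f|$ is large compared with the coefficients, together with a direct comparison of pole counting functions, which gives $N(r,A_f)\ge pN(r,f)-O(S(r))$. For general $A/B$, reduce the degree by division: write $A=QB+C$ with $\deg_w C<\deg_w B$. Then $R_f=Q_f+C_f/B_f$, and $B_f/C_f$ is again a rational function in $f$ of lower total degree, to which the induction hypothesis applies. Alternatively, one can apply a Möbius change $f\mapsto 1/(f-\beta)$ to reduce to $q\le p$; this costs only $O(1)$ by Proposition 1.A(a) and the first main theorem. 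The case $p<q$ follows from the case $q<p$ by passing to $1/R_f$.

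\textbf{Main obstacle.} I expect the hardest step to be coprimality in the lower bound. Because $A$ and $B$ are coprime as polynomials in $w$ over the field of meromorphic functions, there are polynomials $U,V$ with coefficients in that field satisfying $UA+VB=\mathrm{Res}(z)$, where the coefficients of $U$ and $V$ and the resultant $\mathrm{Res}$ all have characteristic $O(S(r))$. From this identity, common zeros of $\tilde A$ and $\tilde B$ can occur only on $\mu^0$ of $\mathrm{Res}$ and of the coefficients. Hence
\[
N(r,R_f)+N(r,1/R_f)\ge d\,T(r,f)-O(S(r)).
\]
The delicate point is carrying out this divisor bookkeeping in $\mathbb{C}^n$, where cancellation happens along hypersurfaces rather than at isolated points. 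I would handle it by working with $\mu^0$ of the entire functions $\tilde A,\tilde B,\mathrm{Res}$ directly and integrating against $\omega_W^{n-1}$ as in Definition 1.F. The notation $\parallel$ in the statement signals that the relation only needs to hold outside an exceptional set of finite measure. I expect the exceptional set actually to be empty, since no second main theorem enters the argument.
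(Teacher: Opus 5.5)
The paper gives no proof of this lemma; it is quoted from Hu--Li--Yang. Your argument therefore has to stand on its own. Your upper bound and the polynomial case $B\equiv 1$ are essentially sound, but the lower bound for genuinely rational $R$ is not established.

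\textbf{The resultant step is false as written.} You conclude $N(r,R_f)+N(r,1/R_f)\ge d\,T(r,f)-O(S(r))$. Take $f=e^{z_1}$ on $\mathbb{C}^n$ and $R(z,w)=w$, which has constant coefficients, so $S(r)=O(1)$. The left side is identically $0$, while $T(r,f)\to\infty$ (by Proposition \ref{Pro2}, $f$ has order one). Counting functions of zeros and poles of $R_f$ cannot see the part of $T(r,f)$ carried by the proximity function, for instance when $f$ omits values.

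\textbf{The division route does not close the gap either.} From $R_f=Q_f+C_f/B_f$ the induction hypothesis gives you $T(r,Q_f)$ and $T(r,C_f/B_f)$. However, there is no general lower bound for $T(r,g+h)$ in terms of $T(r,g)$ and $T(r,h)$. You would have to prove non-cancellation separately, in two parts. First, poles of $Q_f$ lie over poles of $f$ and poles of $C_f/B_f$ over zeros of $B_f$, so they interact only on coefficient divisors. Second, $\log^+\min\{|Q_f|,|C_f/B_f|\}$ is dominated by $\log^+$ of coefficient expressions. Together these would give $T(r,R_f)\ge T(r,Q_f)+T(r,C_f/B_f)-O(S(r))$. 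None of this appears in the proposal.

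\textbf{How to fix it.} The resultant is the right tool, but it must be applied to $\max\{|\tilde A|,|\tilde B|\}$; this is your upper-bound computation run backwards. Let $\hat A=\sum_j a_jw_1^jw_0^{d-j}$ and $\hat B=\sum_j b_jw_1^jw_0^{d-j}$, and let $\mathrm{Res}$ be the resultant of these two binary forms of degree $d$. It is $\not\equiv 0$ because $A,B$ are coprime and one of $a_p$, $b_q$ is a genuine coefficient of $w_1^d$. There are forms $U_i,V_i$ of degree $d-1$, with coefficients polynomial in the $a_j,b_j$, such that
\[
U_0\hat A+V_0\hat B=\mathrm{Res}\cdot w_0^{2d-1},\qquad U_1\hat A+V_1\hat B=\mathrm{Res}\cdot w_1^{2d-1}.
\]
Substituting $(w_0,w_1)=(f_0,f_1)$ gives two facts.
\begin{enumerate}
\item[(i)] Pointwise, $\log\max\{|\tilde A|,|\tilde B|\}\ge d\log\max\{|f_0|,|f_1|\}+\log|\mathrm{Res}|-\log^+K-O(1)$, where $K$ is a polynomial in the $|a_j|,|b_j|$.
\item[(ii)] As divisors, $\gamma\le\mu_{\mathrm{Res}}+\pi$. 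Here $\gamma$ is the componentwise minimum of $\mu_{\tilde A}$ and $\mu_{\tilde B}$ along irreducible hypersurfaces, and $\pi\ge 0$ is supported on poles of the coefficients, with $N_\pi(r)=O(S(r))$.
\end{enumerate}
For (ii), check the inequality at generic points of each component, where at least one of $f_0,f_1$ does not vanish. This is exactly how the several-variable difficulty you flagged is handled: the codimension-two set $\{f_0=f_1=0\}$ is avoided. Now use
\[
T(r,R_f)=\int_{W\langle r\rangle}\log\max\{|\tilde A|,|\tilde B|\}\,\sigma_W-N_\gamma(r)+O(1)
\]
together with Jensen's formula $\int_{W\langle r\rangle}\log|\mathrm{Res}|\,\sigma_W=N_{\mu_{\mathrm{Res}}}(r)+O(1)$. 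The resultant terms cancel, and $T(r,R_f)\ge d\,T(r,f)-O(S(r))$ for every $r$.

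This handles all $(p,q)$ at once, so the induction, the M\"obius change and the separate polynomial case become unnecessary. It also confirms your expectation that no exceptional set is needed. One minor point: Proposition 1.A(a) concerns orders only; the identity $T(r,1/(f-\beta))=T(r,f)+O(1)$ comes from the first main theorem.
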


\begin{lem}\label{L2.5}\cite[Corollary 4.5]{Berenstein-Chang-Li-1994} Suppose $a_0 ,a_1,\ldots,a_m\; (m\geq 1)$ are meromorphic functions on $\mathbb{C}^n$ and $g_0,g_1,\ldots,g_m$ are entire functions on $\mathbb{C}^n$ such that $g_j-g_k$ are not constants for $0\leq j<k\leq m$. If the conditions 
\begin{align*}
\sum\limits_{j=0}^m a_je^{g_j}=0
\end{align*}
and $\parallel T(r,a_j)=o(T(r))$, $j=0,1,\ldots,n$
hold, where $T(r)=\max_{0\leq k<j\leq n}(T(r,e^{g_j-g_k}))$,
then $a_j\equiv 0$ for all $j=0,1,\ldots,n$.
\end{lem}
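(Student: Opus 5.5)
The plan is to argue by contradiction and induct on $m$, using the second main theorem of Cartan type for meromorphic functions on $\mathbb{C}^n$ in its ``Borel identity'' form. Suppose the identity $\sum_{j=0}^m a_je^{g_j}=0$ holds with some $a_j\not\equiv 0$. First discard the vanishing terms, so that all remaining $a_j\not\equiv 0$; after relabelling, at least two terms survive, since a single term $a_0e^{g_0}\equiv 0$ forces $a_0\equiv 0$. Since each $g_j-g_k$ is a non-constant entire function, every $e^{g_j-g_k}$ is transcendental, and hence $T(r)\to\infty$ faster than $\log r$.

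\emph{Base case $m=1$.} From $a_1e^{g_1-g_0}=-a_0$ and the first main theorem we get
\[
T(r,e^{g_1-g_0})\le T(r,a_0)+T(r,a_1)+O(1)=o(T(r)).
\]
Here $T(r)=T(r,e^{g_1-g_0})$ up to $O(1)$, because $T(r,e^{-h})=T(r,e^{h})+O(1)$. This is a contradiction outside the exceptional set.

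\emph{General step.} Set $F_j=-(a_j/a_0)e^{g_j-g_0}$ for $j=1,\dots,m$, so that $F_1+\cdots+F_m\equiv 1$.
\begin{enumerate}
\item[(a)] If $F_1,\dots,F_m$ are linearly dependent over $\mathbb{C}$, some proper nontrivial relation $\sum_{j\in J}\gamma_jF_j=0$ exists. Multiplying by $-a_0e^{g_0}$ turns it into an identity $\sum_{j\in J}\gamma_ja_je^{g_j}=0$ with fewer terms and coefficients of the same type. Apply the induction hypothesis to it.
\item[(b)] If they are linearly independent, apply the Cartan-type second main theorem (the several-variable Borel lemma, e.g.\ via the logarithmic derivative lemma for the Wronskian built from partial derivatives). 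It gives
\[
T(r,F_j)\le \sum_{k=1}^m\bigl(N(r,0;F_k)+N(r,\infty;F_k)\bigr)+N(r,0;W)+o\Big(\sum_k T(r,F_k)\Big),
\]
where $W$ is the generalized Wronskian. Zeros and poles of each $F_k$ can only come from zeros and poles of $a_k$ and $a_0$, since the exponentials have none. Hence all counting terms are $o(T(r))$, and so $T(r,F_j)=o(T(r))$. Consequently
\[
T(r,e^{g_j-g_0})\le T(r,F_j)+T(r,a_j)+T(r,a_0)+O(1)=o(T(r)),
\]
and $e^{g_j-g_k}=e^{g_j-g_0}/e^{g_k-g_0}$ then gives $T(r)=o(T(r))$, which is absurd.
\end{enumerate}

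The main obstacle is the bookkeeping in case (a). The induction hypothesis, applied verbatim to a subfamily, measures smallness against the subfamily maximum $T_J(r)\le T(r)$, whereas we only know $T(r,a_j)=o(T(r))$. I would first try to strengthen the statement being proved by induction to the following: \emph{if $S(r)$ is any function with $T(r,a_j)=o(S(r))$ for all $j$ and $T(r,e^{g_j-g_k})=O(S(r))$ for all $j,k$, then all $a_j\equiv 0$.} The base case and case (b) above go through unchanged with $S$ in place of $T$, since there we only use $T(r,e^{g_j-g_k})=O(S(r))$ and the final contradiction becomes $T(r,e^{g_j-g_k})=o(S(r))$ for every pair. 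For this to be contradictory I also need the reverse comparison $S(r)=O(T(r))$, which holds when we start from $S=T$. I would therefore track a chain of subfamilies, noting that a dependence relation produced in case (a) involves only the original exponentials $e^{g_j-g_0}$. If this strengthening causes trouble, the alternative is to select in case (a) a maximal linearly independent subfamily and rewrite the identity $\sum F_j=1$ in terms of it. This yields an identity of the same shape with coefficients that are $\mathbb{C}$-linear combinations of the $a_j$, hence still small with respect to the original $T(r)$, and one can then run case (b) directly.
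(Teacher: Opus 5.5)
\textbf{The difficulty you flag in case (a) cannot be removed by any induction: with $T(r)$ defined as the \emph{maximum} of $T(r,e^{g_j-g_k})$, as printed, the statement is false.} So neither of your proposed repairs can succeed. Work in $z_1$ only and take $g_0=0$, $g_1=z_1$, $g_2=z_1^2$, $g_3=z_1^2+z_1$, $a_0=a_2=-e^{z_1}$, $a_1=a_3=1$. Then
\begin{align*}
\sum_{j=0}^{3}a_je^{g_j}=-e^{z_1}+e^{z_1}-e^{z_1^2+z_1}+e^{z_1^2+z_1}\equiv 0 .
\end{align*}
Each $g_j-g_k$ is, up to sign, one of $z_1$, $z_1^2$, $z_1^2+z_1$, $z_1^2-z_1$, so none is constant. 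Moreover $T(r)\geq T(r,e^{z_1^2})$ grows like $r^2$, while $T(r,a_j)=O(r)=o(T(r))$; yet no $a_j$ vanishes.

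This example lands exactly in your case (a): $F_1=1$, $F_2=-e^{z_1^2}$, $F_3=e^{z_1^2}$, and the relation is $F_2+F_3=0$. The sub-identity $a_2e^{g_2}+a_3e^{g_3}=0$ has subfamily characteristic $T(r,e^{g_3-g_2})=T(r,e^{z_1})$, and $a_2=-e^{z_1}$ is not small against it.
\begin{itemize}
\item Your strengthened claim with an auxiliary $S(r)$ fails on this sub-identity with $S(r)=r^2$. The reverse comparison $S=O(T_J)$ is precisely what cannot be maintained.
\item The maximal-independent-subfamily rewrite turns $\sum F_j=1$ into $F_1=1$, which says nothing about $e^{g_2-g_0}=e^{z_1^2}$.
\item Your preliminary step of discarding zero coefficients has the same defect: $g=(0,z_1,z_1^2)$, $a=(-e^{z_1},1,0)$ is already a counterexample.
\end{itemize}

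The paper gives no proof of its own; it only cites Berenstein--Chang--Li. As printed, the statement needs correcting: the indices $n$ should be $m$, and the maximum must be replaced by the minimum, $T(r)=\min_{0\le k<j\le m}T(r,e^{g_j-g_k})$. In other words, each $a_j$ must be small with respect to every $e^{g_j-g_k}$. This is also how the lemma is actually applied in Sub-case 2.1 of the proof of Theorem~\ref{t2.3}, where $T(r,H_{1j})$ is compared with each of the three exponentials separately.

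Under the corrected hypothesis your outline is a complete proof. Deleting zero terms, or passing to a subfamily in case (a), can only increase the minimum, so the hypothesis is inherited by the induction. Case (b) goes through verbatim, because smallness with respect to the minimum implies smallness with respect to the maximum.

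One correction in case (b): in the Cartan--Nevanlinna inequality for $\sum F_j=1$, the generalized Wronskian enters as $N(r,\infty;W)-N(r,0;W)$, not as $+N(r,0;W)$. Its poles are controlled by the poles of the $F_k$, hence by the $a_j$, and its zeros may simply be dropped.
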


\begin{lem}\label{L2.6}\cite{MS1} Let $f$ be a transcendental meromorphic function in $\mathbb{C}^n$ such that $\rho(f)<1$. Let $h>0$. Then there exists an $\varepsilon$-set $E$ in $\mathbb{C}^n$ such that
\begin{align*}
\frac{\partial_{z_i}(f(z+c))}{f(z+c)}\rightarrow 0
\end{align*}
for all $i\in\mathbb{Z}[1,n]$ and 
\begin{align*}
\frac{f(z+c)}{f(z)}\rightarrow 1
\end{align*}
as $||z||\rightarrow \infty$, where $z\in\mathbb{C}^n\backslash  E$ and $||c||\leq h$. Further, $E$ may be chosen so that for large $||z||$, where $z\not\in E$, the function $f(z)$ has no zeros or poles in $||\zeta-z||\leq h$.
\end{lem}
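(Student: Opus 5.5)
Since $\rho(f)<1$, I would first reduce to canonical products of genus zero. After a translation we may assume $f(0)\neq 0,\infty$. Then Proposition \ref{Pro1} gives
\begin{align*}
f=\frac{H_1}{H_2}e^{\alpha},
\end{align*}
where $\deg(\alpha)\leq\rho(f)<1$. Hence $\alpha$ is a constant, and $f=CH_1/H_2$. Here $H_1,H_2$ are the canonical functions of $\mu^0_f$ and $\mu^\infty_f$, and both have order $<1$. By Proposition 1.B the exponent $1$ is an exponent of convergence for each divisor, so the genus is $q=0$. It therefore suffices to prove both assertions for a single canonical function $h$ of genus zero with divisor $\nu$ satisfying $\operatorname{Ord}\nu<1$. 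Indeed, $\partial_{z_i}\log f=\partial_{z_i}\log H_1-\partial_{z_i}\log H_2$, and $f(z+c)/f(z)$ is the quotient of the corresponding ratios for $H_1$ and $H_2$.

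For $h$ I would use Stoll's integral representation of the canonical function with constant weight $q=0$. It writes $\log h(\zeta)$ as an integral over $A=\operatorname{supp}\nu$ of $\nu(x)$ times an elementary kernel. Differentiating under the integral gives $\partial_{z_i}\log h(\zeta)$ as an integral of $\nu$ against a kernel. That kernel is bounded by $C\,\|\zeta-x\|^{-(2n-1)}$ near $x$ and by $C\,\|x\|^{-(2n-1)}$ when $\|x\|\geq 2\|\zeta\|$. I then split $A$ into three pieces for $\|z\|=r$ large:
\begin{itemize}
\item a far part $\|x\|\geq 2r$;
\item an inner part $\|x\|\leq r/2$;
\item a near part $r/2<\|x\|<2r$.
\end{itemize}
For the far and inner parts I would bound the integrals through $n_\nu$, arriving at quantities like $\int_{2r}^\infty n_\nu(t)t^{-2}dt$ and $r^{-1}n_\nu(r/2)$. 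Both are $O(r^{\rho+\varepsilon-1})\to 0$ because $\operatorname{Ord}\nu<1$. The bound holds uniformly for $\zeta$ with $\|\zeta-z\|\leq h$, since such a shift does not change the scale $r$.

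The near part is the main obstacle, because in $\mathbb{C}^n$ the divisor is an analytic hypersurface rather than a discrete set. The plan is to cover the annulus $r/2<\|x\|<2r$ (dyadically in $r$) by balls of radius $\sim r^{1-\delta}$ centred on $A$, with $\delta>0$ small. By Lelong-number and density estimates, the number of such balls needed is controlled by $n_\nu(2r)\,r^{(2n-2)\delta}$-type quantities. I would then enlarge each ball by the fixed amount $h$ and let $E$ be the union. Outside $E$ the distance from $\zeta$ to $A$ is at least $r^{1-\delta}$, so the near integral is at most $n_\nu(2r)\,r^{-(1-\delta)}\cdot O(1)$, which tends to $0$ when $\delta<1-\rho-\varepsilon$. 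I then have to check that $E$ is an $\varepsilon$-set, i.e.\ that the sum of the radii of the covering balls weighted by $\|x\|^{-(2n-1)}$ is finite. Each dyadic shell contributes roughly $r^{\rho+\varepsilon-1+O(\delta)}$, including the extra $h$, and these terms sum geometrically. If the direct hypersurface covering proves awkward, I would first try a Cartan-type lemma for the measure $\nu\,\omega_W^{n-1}$ restricted to the shell. By construction, for large $\|z\|$ with $z\notin E$, $f$ has no zeros or poles in $\|\zeta-z\|\leq h$; this is the last assertion.

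Finally, since $\partial_{z_i}\log f\to 0$ uniformly on the ball $\|\zeta-z\|\leq h$ for $z\notin E$, the first claim follows at $\zeta=z+c$. For the second, the absence of zeros and poles in the ball lets me choose a single-valued branch of $\log f$ there, and
\begin{align*}
\log\frac{f(z+c)}{f(z)}=\int_0^1\sum_{i=1}^n c_i\,\partial_{z_i}\log f(z+tc)\,dt\to 0,
\end{align*}
uniformly for $\|c\|\leq h$. Hence $f(z+c)/f(z)\to 1$ as $\|z\|\to\infty$ with $z\notin E$.
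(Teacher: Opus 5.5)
The reduction to genus-zero canonical functions is sound, and so are the kernel bound $C\|\zeta-x\|^{1-2n}$, the far and inner estimates, and the final integration along $t\mapsto z+tc$. The gap is in the near part, which is exactly where $n\ge2$ differs from the plane.

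\textbf{The missing definition.} You never fix what an $\varepsilon$-set in $\mathbb{C}^n$ is, and the statement stands or falls with that choice. The paper does not define it either, since it only quotes the lemma. Take Hayman's definition: balls $B(a_j,r_j)$ with $\sum_j r_j/\|a_j\|<\infty$. Then the radii $t$ for which the sphere $\|z\|=t$ meets $E$ form a set of finite logarithmic measure. But the zero/pole set $A$ of $f$ is nonempty: otherwise $f=e^{g}$, and $\rho(f)<1$ makes $f$ constant by Lemma 2.B. Each irreducible component of $A$ is connected and unbounded, so it meets every sphere of large radius. The last assertion forces $E\supset A\cap\{\|z\|>K\}$. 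Hence under Hayman's definition the lemma is false for $n\ge2$. Already $f=\cos\sqrt{z_1}$ on $\mathbb{C}^2$ violates the first assertion along the line $z_1=1$. Your own weighting, read literally as radii times $\|x\|^{-(2n-1)}$, is not scale invariant and makes the lemma vacuous. A bounded number of balls of radius $2^{k-1}$ covers the $k$-th dyadic shell at cost $O(2^{k(2-2n)})$, which is summable for $n\ge2$. The meaningful choice is $\sum_j(r_j/\|a_j\|)^{2n-1}<\infty$.

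\textbf{The count does not close.} Even with that weight, $N\approx n_\nu(4r)r^{(2n-2)\delta}$ balls of radius $r^{1-\delta}$ cost $N r^{-(2n-1)\delta}\approx n_\nu(4r)r^{-\delta}$ per shell, not $r^{\rho+\varepsilon-1+O(\delta)}$. Summability then needs $\delta\ge\operatorname{Ord}\nu$, while your near-part bound needs $\delta\le 1-\operatorname{Ord}\nu$. So no $\delta$ works once $\operatorname{Ord}\nu>1/2$, and this happens for $\mu^0_f$ or $\mu^\infty_f$ whenever $\rho(f)>1/2$. The same obstruction already occurs in one variable with discs $B(a_k,|a_k|^{1-\delta})$. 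Under Hayman's weight the cost $n_\nu(4r)r^{(2n-3)\delta}$ is not even $o(1)$.

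\textbf{How to fix it.} Your Cartan-type fallback is the right route. It must be run with $(2n-1)$-dimensional content together with Lelong's monotonicity for $\sigma=\nu\,\upsilon_W^{n-1}$.
\begin{enumerate}
\item[(1)] Let $M\approx n_\nu(4r)r^{2n-2}$ be the mass of $\sigma$ in the shell $r/4\le\|x\|\le 4r$, and set $H=M(\log r)^2$.
\item[(2)] By Vitali, the set of $z$ with $\sigma(B(z,s))>Ms^{2n-1}/H$ for some $s\ge h$ is covered by balls with $\sum_j s_j^{2n-1}\lesssim H$. Its cost is $\lesssim n_\nu(4r)(\log r)^2/r$.
\item[(3)] Since $\sigma(B(x,s))\gtrsim s^{2n-2}$ for $x\in A$, the $2h$-neighbourhood of $A$ in the shell is covered by $\lesssim n_\nu(4r)(r/h)^{2n-2}$ balls of radius $3h$. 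Its cost is $\lesssim n_\nu(4r)h/r$.
\item[(4)] Both costs are summable over dyadic $r$. Outside these sets, every $\zeta$ with $\|\zeta-z\|\le h$ has near integral $\lesssim (M/H)\log r+n_\nu(4r)/r\to0$.
\end{enumerate}
Your bound $n_\nu(2r)/\operatorname{dist}(\zeta,A)$ also already requires Lelong's monotonicity. The bare mass bound only gives $n_\nu(2r)r^{-1+(2n-1)\delta}$.
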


\begin{lem}\label{L2.7}\cite{MS1} Let $f$ be a transcendental meromorphic function in $\mathbb{C}^n$ of finite order $\rho$. 
Then there exists an $\varepsilon$-set $E$ in $\mathbb{C}^n$ such that
\begin{align*}
 \displaystyle \left|\frac{\partial_{z_i}(f(z))}{f(z)}\right|\leq ||z||^{\rho-1+\delta},
 \end{align*}
holds for all large values of $||z||$, where $z\in \mathbb{C}^n\backslash E$ and $\delta>0$ is a given constant. 
\end{lem}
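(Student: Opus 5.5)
The plan is to follow the classical one-variable argument of Gundersen for logarithmic derivatives and carry it out on balls in $\mathbb{C}^n$. The two ingredients are a Poisson--Jensen type (Riesz) decomposition of $\log|f|$ and a Cartan/Boutroux type covering lemma, which supplies the $\varepsilon$-set $E$. After a translation we may assume $f(0)\neq 0,\infty$; this changes neither $\rho$ nor the form of the estimate, since $\|z+a\|\sim\|z\|$. We then write $f=g/h$ with $g,h$ entire and coprime at every point. It suffices to prove
\begin{align*}
\left|\frac{\partial_{z_i}g}{g}(z)\right|+\left|\frac{\partial_{z_i}h}{h}(z)\right|\leq \|z\|^{\rho-1+\delta}
\end{align*}
outside an $\varepsilon$-set, because $\frac{\partial_{z_i}f}{f}=\frac{\partial_{z_i}g}{g}-\frac{\partial_{z_i}h}{h}$. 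By Proposition \ref{Pro1} we may take $g=H_1e^{\alpha}$ and $h=H_2$, with $H_1,H_2$ canonical functions of order at most $\rho$ and $\alpha$ a polynomial with $\deg\alpha\le\rho$. Since $\partial_{z_i}\alpha=O(\|z\|^{\deg\alpha-1})$, the problem reduces to an entire function $H$ of order at most $\rho$ with $H(0)=1$ and zero divisor $\nu$.

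Fix $z$ with $\|z\|=r$ large and put $R=2r$. On the ball $\mathbb{C}^n(R)$, $\log|H|$ is plurisubharmonic with Riesz measure $\nu\,\upsilon_W^{n-1}$ (up to normalisation). The Riesz decomposition splits it as the Poisson integral of $\log|H|$ over $\mathbb{C}^n\langle R\rangle$, minus the Green potential $\int_{A[R]}\nu(w)\,G_R(z,w)\,\upsilon_W^{n-1}(w)$, where $G_R$ is the Green function of the ball. Since $H$ is holomorphic, $2\,\partial_{z_i}\log|H|=\partial_{z_i}H/H$. We differentiate each term:
\begin{enumerate}
\item[(a)] The Poisson term has derivative bounded by $C R^{-1}\int_{\mathbb{C}^n\langle R\rangle}|\log|H||\,\sigma_W$. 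This is $\le C R^{-1}\bigl(T(2R,H)+O(1)\bigr)=O(r^{\rho-1+\delta/2})$, using the first main theorem and the finite order.
\item[(b)] The Green term has derivative bounded by $C\int_{A[R]}\nu(w)\,\|z-w\|^{1-2n}\,\upsilon_W^{n-1}(w)$, plus a harmless boundary correction of size $O(n_\nu(2R)/R)$, which is $O(r^{\rho-1+\delta/2})$.
\end{enumerate}

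The main obstacle is the singular potential in (b):
\begin{align*}
U(z)=\int_{A[2r]}\nu(w)\,\|z-w\|^{1-2n}\,\upsilon_W^{n-1}(w).
\end{align*}
The divisor is a $(2n-2)$-dimensional measure of total mass about $n_\nu(2r)r^{2n-2}$. A point $z$ near the support sees a kernel $\|z-w\|^{1-2n}$, which is only weakly integrable against this measure. To handle it, we decompose $\mathbb{C}^n$ into dyadic shells $2^k\le\|w\|<2^{k+1}$. In each shell we apply a Cartan-type lemma for the potential of a measure of mass $M_k\approx n_\nu(2^{k+1})2^{k(2n-2)}$. Outside a union of balls whose radii $\rho_{k,j}$ satisfy $\sum_j\rho_{k,j}\le 2^{k}\,2^{-k\eta}$ for a small $\eta>0$, this gives $U(z)\le C M_k(2^{-k\eta}2^k)^{-(2n-1)}\cdot(\text{log factor})$, up to choosing the exponents correctly. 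Using $n_\nu(t)=O(t^{\rho+\delta/4})$, this bound is $\le r^{\rho-1+\delta/2}$ once $\eta$ is small relative to $\delta$. The union over $k$ of these exceptional balls has $\sum \rho_{k,j}/2^k<\infty$, so it is an $\varepsilon$-set in the sense used for Lemma \ref{L2.6}. I would first try to carry this out by slicing: apply the one-variable Cartan lemma on the complex line through $z$ in the $z_i$ direction, and integrate over the transverse directions by Fubini. If uniformity across slices fails, I would instead use a genuinely $n$-dimensional covering argument (Besicovitch type) for the measure $\nu\,\upsilon_W^{n-1}$.

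Combining (a), (b) and the polynomial bound for $\partial_{z_i}\alpha$, for $H_1$ and $H_2$ separately, gives $|\partial_{z_i}f/f|\le\|z\|^{\rho-1+\delta}$ for all large $\|z\|$ outside the union of the two exceptional sets. That union is again an $\varepsilon$-set, which proves Lemma \ref{L2.7}.
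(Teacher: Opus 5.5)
\textbf{Verdict: genuine gap.} The paper gives no proof of this lemma: it is quoted from \cite{MS1}, and an $\varepsilon$-set in $\mathbb{C}^n$ is never defined in the paper. So your argument has to stand on its own. The reduction via Proposition \ref{Pro1}, the Poisson estimate (a) and the boundary correction in (b) are fine. The Cartan step in (b), however, fails for $n\ge 2$, and the failure is structural.

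\textbf{Why the Cartan step fails.} The Riesz measure $\mu=\nu\,\upsilon_W^{n-1}$ lives on a complex hypersurface of real dimension $2n-2$. The kernel $\|z-w\|^{1-2n}$ is singular of order $2n-1$, so near the support it is not integrable at all against $\mu$, not merely weakly integrable. By Lelong's monotonicity, $\mu(B(w_0,t))\ge c\,t^{2n-2}$ for $w_0\in\operatorname{supp}\nu$. Hence $U\equiv+\infty$ on $\operatorname{supp}\nu$ and $U(z)\ge c\operatorname{dist}(z,\operatorname{supp}\nu)^{-1}$. In layer-cake form, $U(z)=\int_0^\infty(2n-1)t^{-2n}\mu(B(z,t))\,dt$ is controlled by $\mu(B(z,t))\le M(t/h)^p$ only when $p>2n-1$. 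Your condition on $\sum_j\rho_{k,j}$ is the case $p=1$. For $n=1$ the resulting divergence is only logarithmic, and Gundersen's argument absorbs it using the discreteness of the zeros. So any set outside which your bound on $U$ holds must contain a neighbourhood of all of $\operatorname{supp}\nu$, and no covering lemma can give $\sum_j\rho_{k,j}\le 2^{k(1-\eta)}$.

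\textbf{The obstruction is already in $\partial_{z_i}f/f$.} Take $f(z)=\sin z_1$ on $\mathbb{C}^2$ and $i=1$; then $\partial_{z_1}f/f=\cot z_1$ is infinite on $\{z_1=0\}$. The piece $\{z_1=0,\ 2^k\le|z_2|\le 2^{k+1}\}$ is compact, connected and of diameter $2^{k+2}$, so every cover of it by balls has $\sum_j\rho_{k,j}\ge 2^{k+1}$. If $\sum_j r_j/\|a_j\|<\infty$, only finitely many balls have $r_j\ge\|a_j\|/2$, and each of the others meets at most three dyadic shells. Summing over $k$ then shows that no union of balls $B(a_j,r_j)$ with $\sum_j r_j/\|a_j\|<\infty$ can serve as $E$. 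With that notion of $\varepsilon$-set the statement is false for $n\ge 2$. Neither your slicing fallback nor a Besicovitch covering can repair the step, because the target itself is unattainable.

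\textbf{What is missing.} You need a notion of exceptional set that can contain neighbourhoods of hypersurfaces, together with a covering lemma that produces it. Run Cartan's lemma in $\mathbb{R}^{2n}$ with an exponent $p>2n-1$. For a measure of mass $M$ and $h>0$, Vitali's lemma gives balls with $\sum_j\rho_j^p\le(5h)^p$ outside which $\mu(B(z,t))\le M(t/h)^p$ for all $t>0$. Outside them, $\int\|z-w\|^{1-2n}\,d\mu(w)\le C_pMh^{1-2n}$, so the size $M_kh_k^{1-2n}$ you wrote is correct; only the smallness of the exceptional balls must change. Take $M_k\le C2^{k(\rho+\delta/4+2n-2)}$, $h_k=2^{k(1-\eta)}$ and $\eta=\delta/(4(2n-1))$. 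This gives $U(z)\le C2^{k(\rho-1+\delta/2)}$ on the $k$-th shell, outside balls with $\sum_j(\rho_{k,j}/2^k)^p\le 5^p2^{-k\eta p}$.

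Your scheme therefore proves the lemma for an exceptional set that is a union of balls with $\sum_j(r_j/\|a_j\|)^p<\infty$ for some $p>2n-1$, not $\sum_j r_j/\|a_j\|<\infty$. You must state which notion of $\varepsilon$-set you are proving. You must also check that it is the notion used in \cite{MS1} and in Lemma \ref{L2.6}, where $E$ must in addition contain $h$-neighbourhoods of the zero and pole sets.
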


\begin{lem}\label{L2.8} Let $P$ and $Q$ be two polynomials in $\mathbb{C}^n$ such that $P$ is non-constant. If $f$ is a finite order solution in $\mathbb{C}^n$ of the following equation
\begin{align*}
f(z+c)=e^{P(z)}\left(\frac{\partial f(z)}{\partial z_j}+Q(z)f(z)\right)
\end{align*}
where $c\in\mathbb{C}^n\backslash \{0\}$ and $j\in\mathbb{Z}[1,n]$, then $\rho(f)\geq \deg(P)+1$.
\end{lem}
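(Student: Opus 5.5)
Suppose, for contradiction, that $\rho(f)<\deg(P)+1$. Since $f$ has finite order and a nonzero solution is needed for anything to be said, we assume $f\not\equiv0$. We rewrite the equation as
\begin{align*}
e^{P(z)}=\frac{f(z+c)}{\partial_{z_j}f(z)+Q(z)f(z)}=\frac{f(z+c)}{f(z)}\cdot\frac{1}{\frac{\partial_{z_j}f(z)}{f(z)}+Q(z)}.
\end{align*}
The aim is to show that the right-hand side has proximity function growing strictly slower than $T(r,e^{P})$. By Proposition \ref{Pro2}, $T(r,e^{P})$ is comparable to $r^{\deg(P)}$ from both sides, since $\mu(e^P)=\rho(e^P)=\deg(P)$.

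\textbf{Step 1 (the case $\partial_{z_j}f+Qf\not\equiv0$).} We have $T(r,e^P)=m(r,e^P)$. The first factor is handled by Lemma \ref{L2.1}:
\begin{align*}
m\left(r,\frac{f(z+c)}{f(z)}\right)=O\left(r^{\rho(f)-1+\varepsilon}\right).
\end{align*}
For the second factor, note that
\begin{align*}
m\left(r,\frac{1}{\partial_{z_j}f/f+Q}\right)\le T\left(r,\frac{\partial_{z_j}f}{f}+Q\right)+O(1).
\end{align*}
Here $m(r,\partial_{z_j}f/f)=O(\log r)$ by the logarithmic derivative lemma in $\mathbb{C}^n$ for finite order $f$. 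The counting part $N(r,\partial_{z_j}f/f)$ is at most $\overline N(r,0;f)+\overline N(r,f)$, which is $O(r^{\rho(f)+\varepsilon})$. This last bound is the weak point, and it is why the plan switches to an identity argument below.

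\textbf{Step 2 (the main obstacle and the route around it).} The bound from Step 1 only gives $\deg(P)\le\rho(f)$, not $\rho(f)\ge\deg(P)+1$. To gain the extra $1$, we use the pointwise estimates of Lemmas \ref{L2.6} and \ref{L2.7} instead of proximity estimates. By Lemma \ref{L2.7}, outside an $\varepsilon$-set we have
\begin{align*}
\left|\frac{\partial_{z_j}f}{f}\right|\le\|z\|^{\rho(f)-1+\delta}.
\end{align*}
A pointwise analogue of Lemma \ref{L2.1} is also needed, bounding $|\log|f(z+c)/f(z)||$ by $\|z\|^{\rho(f)-1+\delta}$ off an exceptional set; this is a several-variable version of the Chiang–Feng estimate, which we would assume from \cite{MS1} or derive from Lemma \ref{L2.7} by integrating along the segment from $z$ to $z+c$. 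Combining these gives
\begin{align*}
|\mathrm{Re}\,P(z)|\le\|z\|^{\rho(f)-1+\delta}+O(\log\|z\|)
\end{align*}
outside an $\varepsilon$-set. We then choose a complex line $z=t\xi$ on which $P$ has full degree $d=\deg(P)$ and on which the line meets the exceptional set only in a set of finite logarithmic measure of radii. Along such a ray, $\mathrm{Re}\,P(t\xi)\sim|a|r^{d}\cos(d\theta+\phi)$ reaches size about $r^{d}$. This forces $d\le\rho(f)-1+\delta$ for every $\delta>0$, hence $\rho(f)\ge\deg(P)+1$. The main obstacle is controlling the $\varepsilon$-set so that it avoids suitable directions and radii; we would proceed as in the proof of Lemma \ref{L2.2} in \cite{Cao-Xu-2020}.

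\textbf{Step 3 (the degenerate case $\partial_{z_j}f+Qf\equiv0$).} Then $f(z+c)\equiv0$, so $f\equiv0$. This is excluded, since the statement concerns nontrivial solutions.

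Alternatively, if a cleaner route is wanted, we would substitute $f=ge^{-\int Q\,dz_j}$ where possible. However, $Q$ is a polynomial and this substitution changes the order only when $\deg_{z_j}$-primitives have degree at least $\rho$, so we keep the direct pointwise argument above as the primary plan.
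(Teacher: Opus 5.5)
There is a genuine gap, and the missing idea is small. Your Step 1 diagnosis is correct. Written as $e^{P}=\frac{f(z+c)}{f(z)}\bigl(\partial_{z_j}f/f+Q\bigr)^{-1}$, the identity leaves $m\bigl(r,1/(\partial_{z_j}f/f+Q)\bigr)$ uncontrolled. The paper's own passage from (\ref{Eq3.2}) to a bound on $m(r,e^{P})$ tacitly needs exactly this term. The fix is simply to divide the other way. Since $f\not\equiv 0$, and $f$ cannot be a nonzero constant (that would force the polynomial $Q$ to equal $e^{-P}$), you have
\begin{align*}
e^{-P(z)}=\frac{f(z)}{f(z+c)}\left(\frac{\partial_{z_j}f(z)}{f(z)}+Q(z)\right).
\end{align*}
Since $e^{-P}$ is entire,
\begin{align*}
T\bigl(r,e^{-P}\bigr)=m\bigl(r,e^{-P}\bigr)&\le m\left(r,\frac{f(z)}{f(z+c)}\right)+m\left(r,\frac{\partial_{z_j}f}{f}\right)+m(r,Q)+\log 2\\
&=O\bigl(r^{\rho(f)-1+\varepsilon}\bigr)+O(\log r).
\end{align*}
This uses the half of Lemma \ref{L2.1} that bounds $m(r,f(z)/f(z+c))$, together with the logarithmic derivative lemma. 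Because $\rho(e^{-P})=\deg P\ge 1$ by Lemma 2.B, this gives $\deg P\le\rho(f)-1+\varepsilon$ for every $\varepsilon>0$, i.e.\ $\rho(f)\ge\deg P+1$. This is the proximity-function argument the paper intends, in the form that actually closes.

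Instead, you switch to a pointwise argument that is not a proof as it stands, for three reasons.

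First, the two-sided bound $|\mathrm{Re}\,P(z)|\le\|z\|^{\rho(f)-1+\delta}+O(\log\|z\|)$ does not follow. Lemma \ref{L2.7} bounds $|\partial_{z_j}f/f+Q|$ only from above, and nothing keeps $\partial_{z_j}f+Qf$ from vanishing off the $\varepsilon$-set. All you get is $\mathrm{Re}(-P(z))\le\|z\|^{\rho(f)-1+\delta}+O(\log\|z\|)$, which is just the pointwise shadow of the identity above. This one-sided bound would still suffice, but only at points where $\mathrm{Re}\,P$ is very negative.

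Second, a pointwise shift estimate for $\log|f(z+c)/f(z)|$ in $\mathbb{C}^n$ is not among the tools available here. It also cannot be obtained by integrating Lemma \ref{L2.7} along $[z,z+c]$. That requires the whole segment to avoid the exceptional set and the zeros and poles of $f$, and sweeping the exceptional set along $[0,c]$ fattens every piece by $\|c\|$, which in general destroys its smallness.

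Third, choosing a line or sphere on which the exceptional set is negligible is explicitly left open.

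Integrating the one-sided inequality over spheres, i.e.\ working with proximity functions as above, removes all three difficulties at once.
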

\begin{proof} We have
\begin{align}\label{Eq3.2}
\frac{f(z+c)}{f(z)}=e^{P(z)}\left(\frac{\partial f(z)}{\partial z_j}/f(z)+Q(z)\right).
\end{align}
Now using Lemma 1.37 \cite{Hu-Li-Yang-2003}, we have
\begin{align*}
m\left(\frac{\partial f(z)}{\partial z_j}/f(z)\right)=O(\log r)
\end{align*}
and by Lemma \ref{L2.1}, we obtain
\begin{align*}
m\left(r,f(z+c)/f(z)\right)=O\left(r^{\rho(f)-1+\varepsilon}\right)
 \end{align*}
holds for any $\varepsilon>0$. Since $m(r,Q)=O(\log r)$, from (\ref{Eq3.2}), we deduce that
\begin{align*}
m(r,e^P)=O(r^{\deg(P)})\leq O\left(r^{\rho(f)-1+\varepsilon}\right)+O(\log r),
\end{align*}
which shows that $\rho(f)\geq \deg(P)+1$.
\end{proof}

\subsection{{Proof of Theorem \ref{t2.2}}} By the given condition, we have
\begin{align}\label{t2.2:1.1}
\frac{\Delta_c^k f(z)}{f(z)}=e^{P(z)},
\end{align}
where $P$ is an entire function in $\mathbb{C}^n$. Since $\rho(f)<2$, using Lemma 2.B to (\ref{t2.2:1.1}), we deduce that $P$ is a polynomial in $\mathbb{C}^n$ of degree at most one. If possible, suppose $\deg(P)=1$. Note that
\begin{align}\label{t2.2:1.2}
\Delta_c^{k} f(z)=\sum_{j=0}^{k}(-1)^{k-j}\binom{k}{j}f(z + jc).
\end{align}
Substituting (\ref{t2.2:1.2}) into (\ref{t2.2:1.1}), we get
\begin{align}\label{t2.2:1.3}
f(z + kc)+\sum_{j=1}^{k-1}(-1)^{k-j}\binom{k}{j}f(z + jc)+\bigl[(-1)^k - e^{P(z)}\bigr]f(z)= 0.
\end{align}

Now using Lemma \ref{L2.2} to (\ref{t2.2:1.3}), we deduce that $\rho(f)\ge \rho(e^P)+ 1=2$, which contradicts the assumption that $\rho(f)<2$. Hence $P$ is a constant. If we take $d=e^P$, then from (\ref{t2.2:1.1}), we have 
\begin{align}\label{t2.2:1.4}
\Delta_c^{k} f(z) \equiv df(z)
\end{align}
for all $z\in\mathbb{C}^n$. If possible, suppose $\rho(f)<1$. Now substituting (\ref{t2.2:1.2}) into (\ref{t2.2:1.4}) and then using Lemma \ref{L2.6}, we get
\begin{align*}
d=\frac{\Delta_c^{k} f(z)}{f(z)}=\sum_{j=0}^{k}(-1)^{k-j}\binom{k}{j}\frac{f(z + jc)}{f(z)}\to \sum_{j=0}^{k}(-1)^{k-j}\binom{k}{j}=(1-1)^k=0
\end{align*}
as $||z||\to\infty$ possibly outside an $\varepsilon$-set $E$ in $\mathbb{C}^n$, which is impossible. Hence $\rho(f)\geq 1$.

\subsection{{Proof of Theorem \ref{t2.3}}} By the given condition, we have
\begin{align}\label{t2.3:1.1}
\frac{f(z+c)-a(z)}{f(z)-a(z)}=e^{P(z)},
\end{align}
where $P$ is an entire function in $\mathbb{C}^n$. Using Lemmas \ref{L2.3} and \ref{L2.4} to (\ref{t2.3:1.1}), we deduce that $\rho(e^P)\leq \rho(f)$. Since $\rho(f)$ is finite, using Lemma 2.B, we obtain $\deg(P)=\rho(e^P)\leq \rho(f)$. Note that $\infty$ is a Borel exceptional value of $f$. Since $\alpha\in\mathbb{C}$ is a Borel exceptional value of $f$, by Proposition \ref{Pro1}, we have 
\begin{align}\label{t2.3:1.2}
f(z)=H(z)e^{Q(z)}+\alpha,
\end{align}
where $H(z)$ is a non-zero entire function in $\mathbb{C}^n$ and $Q(z)$ is a polynomial in $\mathbb{C}^n$ such that $\rho(H)<\rho(f)=\rho(e^Q)=\deg(Q)$. Since $f$ has two distinct Borel exceptional values, by Proposition \ref{Pro3}, we have $\rho(f)=\mu(f)$. Then by Proposition 1.A, we have $T(r,H)=o(T(r,f))$. We now conclude from (\ref{t2.3:1.1}) and (\ref{t2.3:1.2}) that
\begin{align}\label{T2.3:1.3}
H(z+c)e^{Q(z+c)}-H(z)e^{P(z)+Q(z)}+(a(z)-\alpha)e^{P(z)}-(a(z)-\alpha)= 0.
\end{align}

First, we suppose that $P(z)$ is a non-constant polynomial. Clearly
\begin{align*}
1 \le \deg(P) \le \rho(f) = \deg(Q).
\end{align*}
We now consider the following two cases.

\medskip
\noindent
\textbf{Case 1.} Let $1 \le \deg(P) < \rho(f)=\deg(Q)$. Equation (\ref{T2.3:1.3}) can be rewritten as
\begin{align}\label{t2.3:1.3}
H(z+c)e^{Q(z+c)-Q(z)}-H(z)e^{P(z)}=(a(z)-\alpha)\bigl(1 - e^{P(z)}\bigr)e^{-Q(z)}.
\end{align}

Using Proposition 1.A, we see that the order of the left-hand side of (\ref{t2.3:1.3}) is less than $\rho(f)$, while the order of the right-hand side of (\ref{t2.3:1.3}) is equal to $\rho(f)$. This yields a contradiction.

\medskip
\noindent
\textbf{Case 2.} Let $1 \le \deg(P)=\rho(f)=\deg(Q)=m$. Suppose
\begin{align*}
P(z)=\sum\limits_{j=0}^m P_j(z)\quad \text{and}\quad Q(z)=\sum\limits_{j=0}^m Q_j(z),
\end{align*}
where $P_j(z)$ and $Q_j(z)$ are homogeneous polynomials in $\mathbb{C}^n$ of degree $j\;(0\le j\le m)$ such that $P_m(z)$ and $Q_m(z)$ are non-constant.
Now we consider the following three sub-cases.

\medskip
\noindent
\textbf{Sub-case 2.1.} Let $P_m(z)-Q_m(z)\equiv 0$. Equation (\ref{t2.3:1.3}) can be rewritten as
\begin{align}\label{t2.3:1.4}
H_{11}(z)e^{-Q(z)} + H_{12}(z)e^{P(z)} + H_{13}(z)e^{Q_{0}(z)}=0,
\end{align}
where $Q_{0}(z)\equiv 0$ and
\begin{align*}
H_{11}(z) = \alpha - a(z),\; H_{12}(z) = -H(z),\;
H_{13}(z)= H(z+c)e^{Q(z+c)-Q(z)} + (a(z)-\alpha)e^{P(z)-Q(z)}.
\end{align*}
Note that
\begin{align*}
\deg(-Q-P)= \deg(-Q-Q_{0})= \deg(P-Q_{0})=m,
\end{align*}
and $T(r,a)=o(T(r,e^{Q})).$
Thus, for all $j = 1,2,3$, we have
\begin{align*}
T(r,H_{1j})= o\left(T\left(r,e^{-Q-P}\right)\right),\;\; T(r,H_{1j})= o\left(T\left(r,e^{-Q-Q_{0}}\right)\right)
\end{align*}
and $T(r,H_{1j})= o\left(T\left(r,e^{P-Q_{0}}\right)\right).$
Now using Lemma \ref{L2.5} to (\ref{t2.3:1.4}), we deduce that $H_{1j}\equiv 0$ for $j = 1,2,3$, which is a contradiction.

\medskip
\noindent
\textbf{Sub-case 2.2.} Let $P_m(z)+Q_m(z)\equiv 0$. Equation (\ref{t2.3:1.3}) can be rewritten as
\begin{align*}
H_{21}(z)e^{Q(z+c)} + H_{22}(z)e^{P(z)} + H_{23}(z)e^{Q_{0}(z)} = 0,
\end{align*}
where $Q_{0}\equiv 0$ and
\begin{align*}
H_{21}(z) = H(z+c),\;\;H_{22}(z) = a(z) - \alpha\;\text{and}\; H_{23}(z) = -H(z)e^{Q(z)+P(z)} - (a(z)-\alpha).
\end{align*}

Now proceeding in the same way as done in the proof Sub-case~2.1, we deduce that $H_{2j}\equiv 0$ for $j = 1,2,3$, which is a contradiction.

\medskip
\noindent
\textbf{Sub-case 2.3.} Let $P_m(z)\pm Q_m(z)\not\equiv 0$. Equation (\ref{t2.3:1.3}) can be rewritten as
\begin{align*}
H_{31}(z)e^{Q(z+c)} + H_{32}(z)e^{Q(z)+P(z)}+ H_{33}(z)e^{P(z)} + H_{34}(z)e^{Q_{0}(z)}=0,
\end{align*}
where $Q_{0}\equiv 0$ and
\begin{align*}
H_{31}(z) = H(z+c),\;\;H_{32}(z)= -H(z),\;\;H_{33}(z)= a(z) - \alpha\;\text{and}\;H_{34}(z) = \alpha - a(z).
\end{align*}
Now proceeding in the same way as done in the proof Sub-case~2.1, we deduce that $H_{3j}\equiv 0$ for $j=1,2,3,4$, which is a contradiction.
\vspace{1.2mm}

Next we suppose that $P$ is a constant. Let $d=e^{P}$. If possible, suppose $d\neq 1$. Then equation (\ref{t2.3:1.3}) becomes
\begin{align}\label{t2.3:1.5}
H(z+c)e^{Q(z+c)-Q(z)}-dH(z)=(a(z)-\alpha)(1-d)e^{-Q(z)}.
\end{align}
Note that
\begin{align*}
\rho\!\left((a(z)-\alpha)(1-d)e^{-H(z)}\right)=m= \rho(f)
\end{align*}
and
\begin{align*}
\rho\!\left(H(z+c)e^{Q(z+c)-Q(z)}-dH(z)\right)<m= \rho(f).
\end{align*}
Therefore from (\ref{t2.3:1.5}), we get a contradiction. Hence $d=1$ and so from (\ref{t2.3:1.1}), we have reduces to $f(z+c) \equiv f(z)$.

\subsection{{Proof of Theorem \ref{t2.4}}} It is easy to verify that the equation
(\ref{Eq3.1}) has no rational solutions in $\mathbb{C}^n$. If possible, suppose $f$ is a transcendental meromorphic solution of the equation (\ref{Eq3.1}) such that $\rho(f)<1$. Then By Lemmas \ref{L2.6} and \ref{L2.7}, we have respectively
\begin{align}\label{t2.4:1.1}
\displaystyle \frac{f(z+c)}{f(z)}\rightarrow 1 \;\text{and}\; \left|\frac{\partial_{z_i}(f(z))}{f(z)}\right|=o(1),
\end{align}
for large values of $||z||$, where $z\in \mathbb{C}^m\backslash E$ and $i\in\mathbb{Z}[1,m]$, where $E$ is an $\varepsilon$-set in $\mathbb{C}^n$. Now using (\ref{t2.4:1.1}) to (\ref{Eq3.1}), we immediately get a contradiction. Hence $\rho(f)\geq 1$.

\subsection{{Proof of Theorem \ref{t2.5}}} Note that $0$ and $\infty$ are the Borel exceptional values of $f$.
By Proposition \ref{Pro1}, we have $f(z)=g(z)e^{Q(z)}$, where $g(z)$ is a non-zero entire function in $\mathbb{C}^n$ such that $g(0)\neq 0$ and $Q(z)$ is a polynomial in $\mathbb{C}^n$ such that $\rho(g)<\rho(f)=\rho(e^Q)=\deg(Q)$. Also by Proposition \ref{Pro3}, we have $\rho(f)=\mu(f)\in\mathbb{N}$. Then by Proposition 1.A, we have $T(r,g)=o(T(r,f))$. Now from (\ref{Eq3.1}), we have
\begin{align}\label{t2.5:1.1}
\left(\frac{\partial g(z)}{\partial z_j}+g(z)\frac{\partial Q(z)}{\partial z_j}\right)e^{Q(z)-Q(z+c)}=g(z+c).
\end{align}
Note that $\deg(Q(z)-Q(z+c))=\deg(Q(z))-1$. If $Q(z)-Q(z+c)$ is non-constant, then using lemma \ref{L2.7} to (\ref{t2.5:1.1}), we deduce that $\rho(g)\geq \deg(Q(z)-Q(z+c))+1=\deg(Q(z))$, which contradicts the fact that $\rho(g)<\deg(Q)$. Hence $Q(z)-Q(z+c)$ is a constant and so $\deg(Q)=1$. Let $Q(z)=a_1z_1+a_2z_2+\ldots+a_nz_n+b$, where $(a_1,a_2,\ldots,a_n)\neq (0,0,\ldots,0)$ and $b\in\mathbb{C}$. Clearly $Q(z+c)-Q(z)=a_1c_1+a_2c_2+\ldots+a_nc_n$ and $\rho(g)<1$. Now from (\ref{t2.5:1.1}), we get
\begin{align}\label{t2.5:1.2}
\frac{\partial g(z)}{\partial z_j}+a_jg(z)=e^{a_1c_1+a_2c_2+\ldots+a_nc_n}g(z+c).
\end{align}
Using (\ref{t2.4:1.1}) to (\ref{t2.5:1.2}), we deduce that $a_j=e^{a_1c_1+a_2c_2+\ldots+a_nc_n}$ and so from (\ref{t2.5:1.2}), we get
\begin{align}\label{t2.5:1.3}
\frac{\partial g(z)}{\partial z_j}=a_j(g(z+c)-g(z)).
\end{align}
If $g$ is a polynomial in $\mathbb{C}^n$, then from (\ref{t2.5:1.3}) it is easy to deduce that $\deg(g)\leq 1$. Let $g(z)=b_1z_1+b_2z_2+\ldots+b_nz_n+b_0$, where $b_0,b_1,b_2,\ldots,b_n\in\mathbb{C}$. Since $g(0)\neq 0$, it follows that $b_0\neq 0$.

\vspace{0.1in}
{\bf Compliance of Ethical Standards:}\par

{\bf Conflict of Interest.} The authors declare that there is no conflict of interest regarding the publication of this paper.\par

{\bf Data availability statement.} Data sharing not applicable to this article as no data sets were generated or analysed during the current study.

\end{document}